\newcommand{\invisible}[1]{}
\newcommand{\Z}{\mathbb{Z}}
\newcommand{\R}{\mathbb{R}}
\newcommand{\N}{\mathbb{N}}
\newtheorem{thm}{Theorem}[section]
\newtheorem{defn}[thm]{Definition}
\newtheorem{lemma}[thm]{Lemma}
\newtheorem{pro}[thm]{Proposition}
\newtheorem{rk}[thm]{Remark}
\newtheorem{cor}[thm]{Corollary}
\newtheorem{condition}[thm]{Condition}
\newcommand{\footnoteremember}[2]{
	\footnote{#2}
	\newcounter{#1}
	\setcounter{#1}{\value{footnote}}
}
\newcommand{\footnoterecall}[1]{
	\footnotemark[\value{#1}]
}
\newcommand{\sss}{\scriptscriptstyle}
\newcommand{\dphigeqalpha}{d_\phi^{\sss(\geq \alpha)}}
\newcommand{\dphigeqalphadir}{d_\phi^{\sss(-,\geq \alpha)}}
\newcommand{\nn}{\nonumber}
\DeclareSymbolFont{extraup}{U}{zavm}{m}{n}
\DeclareMathSymbol{\varheart}{\mathalpha}{extraup}{86}
\DeclareMathSymbol{\vardiamond}{\mathalpha}{extraup}{87}
\newcommand{\ensymboldefinition}{$\blacktriangleleft$}
\newcommand{\conn}{\longrightarrow}
\newcommand{\eqn}[1]{\begin{equation}#1\end{equation}}
\newcommand{\eqan}[1]{\begin{align}#1\end{align}}
\begin{document}
\title{Power-law hypothesis for PageRank on undirected graphs
}
\author{
 Florian Henning\footnoteremember{TUe}{Eindhoven University of Technology, Department of Mathematics \& Computer Science;\\ 
 f.b.henning@tue.nl, r.w.v.d.hofstad@tue.nl, n.v.litvak@tue.nl}
		\and  Remco van der Hofstad \footnoterecall{TUe}
  \and
	Nelly Litvak\footnoterecall{TUe}
}
\date{\today} 	
\maketitle

\begin{abstract}
Based on observations in the web-graph, the \textit{power-law hypothesis} states that PageRank has a power-law distribution with the same exponent as the in-degree. While this hypothesis has been analytically verified for many random graph models, such as directed configuration models and generalized random graphs, surprisingly it has recently been disproven for the directed preferential attachment model.
In this paper, we prove that in \textit{undirected} networks, the graph-normalized PageRank is always upper bounded by the degree. Furthermore, we  prove that the corresponding (asymptotic) lower bound holds true under reasonable assumptions on the local weak limit, but not in general, and we  provide a counterexample.  
Our result shows that PageRank always has  a \textit{lighter} tail than the degree, which contrasts the case of the directed preferential attachment model, where PageRank has a heavier tail instead.
We end the paper with a discussion, where we extend our results to directed networks with a bounded ratio of in- and out-degrees, and reflect on our methods by contrasting the undirected and directed preferential attachment model. 
\end{abstract}

\textbf{Mathematics Subject Classifications (2020).} 05C80 (primary);\\
60J80, 60B20 (secondary)\\
\textbf{Key words.} PageRank, Scale-free network, heavy tails, maximally assortative, local weak convergence.

\section{Introduction and Results}\label{sec: Introduction}
\subsection{Introduction}
The PageRank algorithm was originally introduced by Page and Brin \cite{BrPa98} to rank web pages by their position in the web-graph, where directed edges are hyperlinks from one web page to another.
An important characteristic of the topology of the web-graph is the \textit{scale-free} property, in that the empirical distribution of both in- and out- degrees is approximately a power law \cite{LaGi98}. In the web-graph, it has also been observed that PageRank has  a power-law distribution with approximately the same exponent as that of the in-degree \cite{PaRaUp06,FoBoFlMe08}; these observations gave rise to the so-called \textit{power-law hypothesis} that on every directed graph a power-law distribution of the in-degrees implies a power-law distribution of PageRank with the same exponent.    
This hypothesis has been recently proven to fail in the preferential attachment model \cite{BaOC22}, one of the  models for the web-graph \cite{BaAl99,AlBaJe00,AdHu2000}; specifically,  the tail of the limiting PageRank distribution in this model is heavier than that of the in-degree \cite{BaOC22}.

For undirected graphs, PageRank is known to correlate highly with the degree. Analytically,  the authors of \cite{AvKaOsPrRa15} show that in undirected semi-sparse expanding graphs, the PageRank-distribution can be asymptotically approximated (w.r.t.\ the total variation distance) by a mixture of the restart distribution and the degree distribution.

\paragraph{\textbf{Main innovation in this paper.}} In this paper we analyze the power-law hypothesis in undirected graphs. Importantly, in the remainder of Section \ref{sec: Introduction}, we prove that for every undirected graph, the graph-normalized PageRank is componentwise bounded from above by the degree vector (Theorem \ref{thm: upperBoundUndirected}). This in particular excludes the asymptotic PageRank distribution from having heavier tails than the degree distribution, in contrast to the directed setting \cite{BaOC22}. 
Additionally, we provide a sufficient condition (Proposition \ref{pro: PageRankLowerBound}) for an asymptotic lower bound on the PageRank distribution. Both bounds together imply that if the graph normalized PageRank has an asymptotic power law, then its exponent is the same as that of the degree distribution, provided that the condition for the lower bound is satisfied. The latter condition, in essence, prevents high-degree vertices from having too many high-degree neighbors, and we expect it to hold quite generally in a large class of random graph models and real-world networks.
In Section~\ref{sec: Applications}, we explicitly check the assumptions for the lower bound for unimodular branching process trees and the P\'olya point tree. Together with the results \cite{GavdHLi20,BeBoCh14,vH24} on local weak convergence of the graph-normalized PageRank distribution, this implies that the power-law hypothesis holds true for, respectively,  the undirected configuration model (Theorem \ref{thm: ConfModell}) and the undirected preferential model (Theorem \ref{thm: PrefAttModell}).

To show that, in contrast to the upper bound, the lower bound is not generally true, in Section \ref{sec: counterexample} we provide an explicit construction (Theorem \ref{thm: Counterxample}) of an evolving sequence of graphs exhibiting a power-law distribution with any prescribed exponent, whose associated PageRank distribution does not exhibit a power-law distribution. 
Finally, the Discussion Section \ref{sec: discussion} deals with extensions (Theorem \ref{thm: UpperBoundBoundedRatio} and Proposition \ref{pro: LowerBoundDirected}) of the results from Section \ref{sec: Introduction} to directed networks assuming that the ratios of in- and out-degrees in the graph are uniformly bounded. 
The discussion closes with a heuristics on the different asymptotic behaviour of PageRank for the undirected- and the directed version of the preferential attachment model, thus explaining how the change in the PageRank power-law arises for the directed setting, while it fails to hold for the undirected setting.
\subsection{Basic notions}
First we start with providing the basic notions which will be employed in the reminder of this paper. For further explanations and introduction to mathematical models for complex networks, we refer the reader to \cite{vdH17,vH24} and \cite{Ne18}.

\paragraph{Definitions.}
We consider a sequence $(G_n)_{n \in \mathbb{N}}$ of finite undirected graphs $G_n=(V_n,E_n)$ without isolated vertices with respective vertex sets $V_n=[n]:=\{1,2,\ldots,n\}$, and edge sets $E_n$. We write $i \sim j$ if two vertices $i,j \in V_n$ are connected by an edge. 
Further, $\boldsymbol{A}^{(\sss G_n)}=(a_{ij}^{(\sss G_n)})_{(i,j) \in [n]} \in \{0,1\}^{n \times n}$ denotes the adjacency matrix of $G_n$, which in our undirected case is symmetric. Finally, let $\boldsymbol{d}^{\sss(G_n)}=(d_i^{\sss(G_n)})_{i \in [n]}$ denote the degree vector corresponding to $G_n$. Without loss of generality, we assume that $d_i^{\sss(G_n)}\geq 1$ for all $i\in [n]$, i.e., there are no isolated vertices. From now on, we will omit the superscript $(G_n)$ in, e.g.,  $d_i^{\sss(G_n)}$ when this notation is clear from the context. 

Throughout the paper, we print vectors and matrices in boldface in contrast to scalars, and define the partial order "$\leq$" on $\mathbb{R}^n$ as  $\boldsymbol{a} \leq \boldsymbol{b}$ if and only if $a_i \leq b_i$ for all $i \in [n]$. As usual, $\Vert \boldsymbol{a} \Vert_p:=\left(\sum_{i=1}^n \vert a_i \vert^p \right)^{1/p}$ denotes the $p$-norm, $p \in (0,\infty)$, on $\R^n$, where we will also abbreviate $\vert \boldsymbol{v} \vert :=\Vert \boldsymbol{v} \Vert_1$.
Furthermore, by $\boldsymbol{a} \odot \boldsymbol{b}$ we denote the elementwise (Hadamard) product of two vectors, i.e., $(\boldsymbol{a} \odot \boldsymbol{b})_i:=a_ib_i, \quad i \in[n]$. 
Finally, $\boldsymbol{1}_n \in \R^n$ denotes the vector of ones and $\mathbbm{I}_n \in \R^{n \times n}$ the $n$-dimensional identity matrix. For notational distinction from the above two symbols, we denote the indicator function on any set $B$ by $\mathbbm{1}_B(\cdot)$.  
We say that a real-valued random variable $X$ is \textbf{stochastically bounded from above} by a real-valued random variable $Y$, denoted by $X \preceq Y$, iff $\mathbb{P}(X \geq x) \leq \mathbb{P}(Y \geq y)$ for every $x \in \R$.
Moreover, we say that (cf.\ the relation $\leq_K$ in \cite[P.\ 27]{St83}) a random vector $\boldsymbol{X} \in \R^n$ is stochastically bounded from above by a random vector $\boldsymbol{Y} \in \R^n$, and write $\boldsymbol{X} \preceq \boldsymbol{Y}$, iff $\mathbb{P}(\boldsymbol{X} \geq \boldsymbol{x}) \leq \mathbb{P}(\boldsymbol{Y} \geq \boldsymbol{x})$ for every $\boldsymbol{x} \in \R^n$. Note that if $\boldsymbol{X}=(X_i)_{i \in [n]}$ and $\boldsymbol{Y}=(Y_i)_{i \in [n]}$ have independent components, then $\boldsymbol{X} \preceq \boldsymbol{Y}$ is equivalent to $X_i \preceq Y_i$ for all $i \in [n]$.
Finally, for functions $f,g\colon \R \rightarrow \R$, we write, for $a \in \R \cup \{-\infty,+\infty \}$, that \textbf{$f = o(g)$ as $x$ tends to $a$} iff $\limsup_{x \rightarrow a}\vert \frac{f(x)}{g(x)} \vert=0$ and \textbf{$f = O(g)$ as $x$ tends to $a$} iff $\limsup_{x \rightarrow a} \vert\frac{f(x)}{g(x)} \vert <\infty$.
\paragraph{The PageRank equation.} Define the stochastic matrix $\boldsymbol{P}=\boldsymbol{P}^{\sss(G_n)}=(p_{ij}^{\sss(G_n)})_{i,j \in [n]} \in [0,1]^{n \times n}$ by setting
\begin{equation}\label{eq: PageRankMatrixUndirect} p_{ij}:=p_{ij}^{\sss(G_n)}=\frac{a_{ij}^{\sss(G_n)}}{d_i^{\sss(G_n)}},
\end{equation}
where $(a_{ij}^{\sss(G_n)})_{i,j\in [n]}$ is the adjacency matrix of $G_n$.
Then the \textbf{graph-normalized PageRank equation with damping factor} $c \in (0,1)$ reads
\begin{equation}\label{eq: PageRank matrix}
\boldsymbol{R}=c\, \boldsymbol{R} \boldsymbol{P}+(1-c)\boldsymbol{1}_n, \qquad \boldsymbol{R} \in [0,n]^n.    
\end{equation}
Note that \eqref{eq: PageRank matrix} can be written equivalently in terms of a fixed-point equation with an irreducible matrix acting on vectors of length $n$, so that the solution $\boldsymbol{R}$ appears as the Perron-Frobenius eigenvector and is hence unique. 
In particular, \eqref{eq: PageRank matrix} is uniquely solved for $\boldsymbol{R}$ by
\begin{equation}\label{eq: PageRankSolutionNormal}
\boldsymbol{R}=(1-c)\boldsymbol{1}_n(\mathbbm{I}_n-c\boldsymbol{P})^{-1}.    
\end{equation}
Componentwise, the above equation equals the Neumann series  
\begin{equation}\label{eq: PageRank Solution}
R_k=(1-c)\sum_{s=0}^\infty c^s\Vert (\boldsymbol{P}^s)_{\cdot,k}\Vert_{1} 
=(1-c)\sum_{s=0}^\infty c^s \sum_{j=1}^n (\boldsymbol{P}^s)_{jk}
, \qquad k\in[n]. 
\end{equation}
\subsection{Results}
\subsubsection{Upper bound on PageRank}
Because of symmetry of the adjacency matrix $\boldsymbol{A}$ of an undirected graph, the next general upper bound holds true:
\begin{thm}[PageRank is at most the degree]\label{thm: upperBoundUndirected}
Given the degree vector $\boldsymbol{d}$ of any  undirected graph $G_n=([n],E_n)$, 
the PageRank vector $\boldsymbol{R}^{\sss(G_n)}$ in \eqref{eq: PageRank matrix} is componentwise upper bounded by the degree, i.e.,
\begin{equation}\label{eq: ub on PageRank}
R_i^{\sss(G_n)} \leq  d_i^{\sss(G_n)} \qquad \text{for all }i \in [n].
\end{equation}
\end{thm}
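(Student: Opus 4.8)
The plan is to argue directly from the Neumann-series representation \eqref{eq: PageRank Solution}, using symmetry of $\boldsymbol{A}$ only through the elementary observation that the degree vector is a left fixed point of $\boldsymbol{P}$. Viewing $\boldsymbol{d}$ as a row vector, the $k$-th component of $\boldsymbol{d}\boldsymbol{P}$ equals
\begin{equation*}
\sum_{i\in[n]} d_i\, p_{ik} \;=\; \sum_{i\in[n]} d_i\,\frac{a_{ik}}{d_i} \;=\; \sum_{i\in[n]} a_{ik} \;=\; d_k,
\end{equation*}
where the last equality is exactly the symmetry of $\boldsymbol{A}$: the $k$-th column sum of the adjacency matrix of an undirected graph is the degree of $k$. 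Hence $\boldsymbol{d}\boldsymbol{P} = \boldsymbol{d}$; equivalently, $\boldsymbol{d}/\vert\boldsymbol{d}\vert$ is the stationary distribution of the simple random walk with transition matrix $\boldsymbol{P}$. Iterating, $\boldsymbol{d}\boldsymbol{P}^s = \boldsymbol{d}$ for every $s\ge 0$, i.e.
\begin{equation*}
\sum_{j\in[n]} d_j\, (\boldsymbol{P}^s)_{jk} \;=\; d_k \qquad\text{for all } k\in[n] \text{ and } s\ge 0.
\end{equation*}

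Next I would turn this into a bound on the column sums $\Vert(\boldsymbol{P}^s)_{\cdot,k}\Vert_1=\sum_{j\in[n]}(\boldsymbol{P}^s)_{jk}$ that appear in \eqref{eq: PageRank Solution}. Since $\boldsymbol{P}^s$ has nonnegative entries and $d_j\ge 1$ for all $j$ (the graphs have no isolated vertices), each summand satisfies $(\boldsymbol{P}^s)_{jk}\le d_j\,(\boldsymbol{P}^s)_{jk}$, so
\begin{equation*}
\Vert(\boldsymbol{P}^s)_{\cdot,k}\Vert_1 \;=\; \sum_{j\in[n]}(\boldsymbol{P}^s)_{jk} \;\le\; \sum_{j\in[n]} d_j\,(\boldsymbol{P}^s)_{jk} \;=\; d_k .
\end{equation*}
Substituting into \eqref{eq: PageRank Solution} and summing the resulting geometric series (legitimate since $c\in(0,1)$ and the terms are bounded by $d_k$, so everything converges absolutely) yields
\begin{equation*}
R_k^{\sss(G_n)} \;=\; (1-c)\sum_{s=0}^\infty c^s\,\Vert(\boldsymbol{P}^s)_{\cdot,k}\Vert_1 \;\le\; (1-c)\sum_{s=0}^\infty c^s\, d_k \;=\; d_k,
\end{equation*}
which is \eqref{eq: ub on PageRank}.

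There is no genuinely hard step here: the entire content is the identity $\boldsymbol{d}\boldsymbol{P}=\boldsymbol{d}$, and spotting that this is the right lever — and that it is precisely where undirectedness is used — is the only ``obstacle''. For a directed graph the column sums of $\boldsymbol{A}$ are the in-degrees rather than the out-degrees that normalize $\boldsymbol{P}$, so $\boldsymbol{d}\boldsymbol{P}\ne\boldsymbol{d}$ in general and the argument breaks down, consistently with the heavier PageRank tail reported in \cite{BaOC22}. The only points that require a little care are the row/column-vector bookkeeping in \eqref{eq: PageRank matrix}--\eqref{eq: PageRank Solution} and the justification of the interchange of sums via absolute convergence. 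One could alternatively avoid the series entirely by writing $\boldsymbol{R}=\boldsymbol{d}-\boldsymbol{\varepsilon}$, plugging into \eqref{eq: PageRank matrix}, and showing $\boldsymbol{\varepsilon}\ge\boldsymbol{0}$ from nonnegativity of $(\mathbbm{I}_n-c\boldsymbol{P})^{-1}=\sum_{s\ge0}c^s\boldsymbol{P}^s$ together with $\boldsymbol{d}-c\,\boldsymbol{d}\boldsymbol{P}=(1-c)\boldsymbol{d}\ge(1-c)\boldsymbol{1}_n$, but the series route above is the shortest.
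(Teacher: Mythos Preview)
Your proof is correct and uses the same ingredients as the paper's argument: symmetry of $\boldsymbol{A}$, the assumption $d_j\ge 1$, and the Neumann series. The only difference is packaging---the paper first changes variables to $v_i=R_i/d_i$ (Lemma~\ref{pro: transposedPageRank}) and then bounds $(\mathbbm{I}_n-c\boldsymbol{P})^{-1}\boldsymbol{Q}\boldsymbol{1}_n$ via row-stochasticity (Lemma~\ref{lem: vonNeumann}), whereas you exploit the equivalent stationarity identity $\boldsymbol{d}\boldsymbol{P}=\boldsymbol{d}$ to bound the column sums $\Vert(\boldsymbol{P}^s)_{\cdot,k}\Vert_1$ directly; these are two sides of the same detailed-balance coin.
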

Note that Theorem \ref{thm: upperBoundUndirected} does not require any further assumptions on the graph or the degree sequence. 
Since the proof of Theorem \ref{thm: upperBoundUndirected} is quite simple, we state it now. It is based on two lemmas. The first, Lemma \ref{pro: transposedPageRank}, states that employing symmetry of the adjacency matrix $\boldsymbol{A}=\boldsymbol{A}^{\sss(G_n)}$ we can go over from Equation \eqref{eq: PageRankSolutionNormal} above to an equivalent expression, where the operator $(\mathbbm{I}_n-c\boldsymbol{P})^{-1}$ functions from the left instead of from the right. Afterwards, Lemma \ref{lem: vonNeumann} provides an upper bound for that new expression.  
\begin{lemma}[Solution PageRank for undirected graphs]
\label{pro: transposedPageRank}
Let $G_n=(V_n,E_n)$ be a finite graph without isolated vertices with $\# V_n=n$. 
Then 
the graph-normalized PageRank equation \eqref{eq: PageRank matrix}
is solved by the undirected PageRank equation given by
\begin{equation}\label{eq: PageRankSymmetric}
\boldsymbol{R}^{\sss(G_n)}=(1-c)\boldsymbol{d}^{\sss(G_n)} \odot (\mathbbm{I}_n-c\boldsymbol{P}^{\sss(G_n)})^{-1}\boldsymbol{Q}^{\sss(G_n)}\boldsymbol{1}_n,
\end{equation}
where $\boldsymbol{Q}=\boldsymbol{Q}^{\sss(G_n)} \in \mathbb{R}^{n \times n}$ is the diagonal degree matrix defined by $Q_{i,i}=1/d_i$.
\end{lemma}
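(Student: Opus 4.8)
The plan is to start from the closed-form solution \eqref{eq: PageRankSolutionNormal} of the PageRank equation and transform it by transposition, using the symmetry of the adjacency matrix. Write $\boldsymbol{D}:=\boldsymbol{Q}^{-1}$ for the diagonal matrix with entries $d_i$. By \eqref{eq: PageRankMatrixUndirect} the transition matrix factors as $\boldsymbol{P}=\boldsymbol{Q}\boldsymbol{A}$, and since $\boldsymbol{A}^\top=\boldsymbol{A}$ we get $\boldsymbol{P}^\top=\boldsymbol{A}\boldsymbol{Q}=\boldsymbol{Q}^{-1}(\boldsymbol{Q}\boldsymbol{A})\boldsymbol{Q}=\boldsymbol{Q}^{-1}\boldsymbol{P}\boldsymbol{Q}$; that is, $\boldsymbol{P}$ and $\boldsymbol{P}^\top$ are conjugate via the diagonal matrix $\boldsymbol{Q}$. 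This conjugation identity is the only place where undirectedness is used.

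Recall that $\mathbbm{I}_n-c\boldsymbol{P}$ is invertible (the Neumann series in \eqref{eq: PageRank Solution} converges because $\boldsymbol{P}$ is row-stochastic and $c<1$), so \eqref{eq: PageRankSolutionNormal} is well defined. Transposing \eqref{eq: PageRankSolutionNormal} and using $\big((\mathbbm{I}_n-c\boldsymbol{P})^{-1}\big)^\top=(\mathbbm{I}_n-c\boldsymbol{P}^\top)^{-1}$ yields $\boldsymbol{R}^\top=(1-c)(\mathbbm{I}_n-c\boldsymbol{P}^\top)^{-1}\boldsymbol{1}_n$, with $\boldsymbol{1}_n$ now read as a column vector. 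Substituting $\mathbbm{I}_n-c\boldsymbol{P}^\top=\boldsymbol{Q}^{-1}(\mathbbm{I}_n-c\boldsymbol{P})\boldsymbol{Q}$, hence $(\mathbbm{I}_n-c\boldsymbol{P}^\top)^{-1}=\boldsymbol{Q}^{-1}(\mathbbm{I}_n-c\boldsymbol{P})^{-1}\boldsymbol{Q}$, gives $\boldsymbol{R}^\top=(1-c)\boldsymbol{Q}^{-1}(\mathbbm{I}_n-c\boldsymbol{P})^{-1}\boldsymbol{Q}\boldsymbol{1}_n$. Finally, since $\boldsymbol{Q}^{-1}=\boldsymbol{D}$ is diagonal, left-multiplication by it coincides with the Hadamard product with $\boldsymbol{d}$, so $\boldsymbol{R}^\top=(1-c)\,\boldsymbol{d}\odot\big((\mathbbm{I}_n-c\boldsymbol{P})^{-1}\boldsymbol{Q}\boldsymbol{1}_n\big)$, which is exactly \eqref{eq: PageRankSymmetric} read componentwise.

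I expect no genuine obstacle here; the points requiring care are purely bookkeeping. One must track the row- versus column-vector conventions consistently (the original $\boldsymbol{R}$ acts from the left in \eqref{eq: PageRank matrix}, whereas \eqref{eq: PageRankSymmetric} is an identity between the column vectors of the values $R_i$, which is all that is needed), and one must verify the conjugation identity $\boldsymbol{P}^\top=\boldsymbol{Q}^{-1}\boldsymbol{P}\boldsymbol{Q}$ carefully. As an alternative route, one can bypass transposition entirely and instead plug the right-hand side of \eqref{eq: PageRankSymmetric} directly into the fixed-point equation \eqref{eq: PageRank matrix}, again using $\boldsymbol{A}^\top=\boldsymbol{A}$ to check the identity; by the uniqueness of the solution noted after \eqref{eq: PageRank matrix}, this also establishes the claim.
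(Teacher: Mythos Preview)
Your proof is correct and follows essentially the same approach as the paper: both exploit the symmetry of $\boldsymbol{A}$ through the relation $d_i p_{ij}=a_{ij}=a_{ji}=p_{ji}d_j$ (equivalently, your conjugation identity $\boldsymbol{P}^\top=\boldsymbol{Q}^{-1}\boldsymbol{P}\boldsymbol{Q}$) to convert the equation so that $\boldsymbol{P}$ acts from the left rather than the right. The paper carries this out componentwise via the change of variables $v_i=R_i/d_i$ applied to the fixed-point equation \eqref{eq: PageRank matrix} and then solves for $\boldsymbol{v}$, whereas you transpose the closed-form solution \eqref{eq: PageRankSolutionNormal} and apply the conjugation at the matrix level; these are the same argument in different dress.
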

\begin{proof}[Proof of Lemma \ref{pro: transposedPageRank}]
To ease readability, we will omit the superscripts $(G_n)$ in the proof.
The $i$th component, $i \in [n]$, of the PageRank-Equation \eqref{eq: PageRank matrix} reads
\begin{equation}
R_i=c\sum_{j \in [n]}p_{ji}R_j+(1-c).    
\end{equation}
Divide both sides of the equation by the respective degree $d_i$, and insert the definition of $p_{ji}$ which implies that $d_i p_{ij}=a_{ij}=a_{ji}=p_{ji}d_j$, to obtain
\begin{equation}\label{eq: PageRankSymmetry}
\begin{split}
\frac{R_i}{d_i}&=c\sum_{j \in [n]}\frac{a_{ji}}{d_id_j}R_j+\frac{1-c}{d_i}=c\sum_{j \in [n]}p_{ij}\frac{R_j}{d_j}+\frac{1-c}{d_i},
\end{split}
\end{equation}
where the second equation follows from symmetry of the adjacency matrix $\boldsymbol{A}$.

Next define the vector $\boldsymbol{v}$ by $v_i:=R_i/d_i,$ for $i\in[n]$.
In matrix-notation, this change of variables transforms equation
 \eqref{eq: PageRank matrix} to 
\begin{equation}\label{eq: PageRank matrix transposed}
\boldsymbol{v}=c \,\boldsymbol{P} \boldsymbol{v}+(1-c)\boldsymbol{Q}\boldsymbol{1}_n.
\end{equation}
As we have $\Vert c\boldsymbol{P} \Vert=c \Vert \boldsymbol{P} \Vert \leq c <1$, where $\Vert \boldsymbol{P} \Vert:=\sup_{\Vert \boldsymbol{w} \Vert_2=1} \Vert \boldsymbol{P}\boldsymbol{w} \Vert_2$ denotes the operator-norm, we can solve \eqref{eq: PageRank matrix transposed} for $\boldsymbol{v}$ (see, e.g., \cite[Section 5.7]{Al16}) to arrive at 
\begin{equation}\label{eq: UpperBoundAlmostThereWithV}
\boldsymbol{v}=(1-c)(\mathbbm{I}_n-c\boldsymbol{P})^{-1}\boldsymbol{Q}\boldsymbol{1}_n,    
\end{equation}
which, by definition of $\boldsymbol{v}$, is in turn equivalent to the undirected PageRank equation
\begin{equation}\label{eq: UpperBoundAlmostThere}
\boldsymbol{R}=(1-c)\boldsymbol{d} \odot (\mathbbm{I}_n-c\boldsymbol{P})^{-1}\boldsymbol{Q}\boldsymbol{1}_n.     
\end{equation} 
\end{proof}
\begin{lemma}[Bound on undirected PageRank equation]\label{lem: vonNeumann}
The second factor in \eqref{eq: UpperBoundAlmostThere} satisfies $(\mathbbm{I}_n-c\boldsymbol{P})^{-1}\boldsymbol{Q}\boldsymbol{1}_n \leq \boldsymbol{1}_n/(1-c).$
\end{lemma}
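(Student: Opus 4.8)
The plan is to expand $(\mathbbm{I}_n-c\boldsymbol{P})^{-1}$ as a Neumann series and bound the resulting vector termwise. Since $\Vert c\boldsymbol{P}\Vert \le c < 1$ (as established in the previous proof), we may write
\begin{equation}\label{eq: NeumannForBound}
(\mathbbm{I}_n-c\boldsymbol{P})^{-1}\boldsymbol{Q}\boldsymbol{1}_n = \sum_{s=0}^\infty c^s \boldsymbol{P}^s \boldsymbol{Q}\boldsymbol{1}_n.
\end{equation}
Here $\boldsymbol{Q}\boldsymbol{1}_n$ is the vector with $i$th entry $1/d_i$, which is componentwise at most $\boldsymbol{1}_n$ since $d_i\ge 1$ for all $i$. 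The key structural observation is that $\boldsymbol{P}$ is a (row-)stochastic matrix: each row of $\boldsymbol{P}$ sums to $1$ because $\sum_{j} p_{ij} = \sum_j a_{ij}/d_i = d_i/d_i = 1$. Consequently $\boldsymbol{P}\boldsymbol{1}_n = \boldsymbol{1}_n$, and more generally $\boldsymbol{P}^s\boldsymbol{1}_n = \boldsymbol{1}_n$ for every $s\ge 0$.

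First I would record the monotonicity fact that $\boldsymbol{P}$, having only nonnegative entries, is order-preserving on $\R^n$: if $\boldsymbol{a}\le\boldsymbol{b}$ then $\boldsymbol{P}\boldsymbol{a}\le\boldsymbol{P}\boldsymbol{b}$, and hence $\boldsymbol{P}^s\boldsymbol{a}\le\boldsymbol{P}^s\boldsymbol{b}$. Applying this with $\boldsymbol{a}=\boldsymbol{Q}\boldsymbol{1}_n\le\boldsymbol{1}_n=\boldsymbol{b}$ gives, for each $s$,
\begin{equation}\label{eq: termwiseBound}
\boldsymbol{P}^s\boldsymbol{Q}\boldsymbol{1}_n \;\le\; \boldsymbol{P}^s\boldsymbol{1}_n \;=\; \boldsymbol{1}_n.
\end{equation}
Summing \eqref{eq: termwiseBound} against the nonnegative weights $c^s$ and using \eqref{eq: NeumannForBound} yields
\begin{equation}\label{eq: finalBoundChain}
(\mathbbm{I}_n-c\boldsymbol{P})^{-1}\boldsymbol{Q}\boldsymbol{1}_n \;=\; \sum_{s=0}^\infty c^s \boldsymbol{P}^s\boldsymbol{Q}\boldsymbol{1}_n \;\le\; \sum_{s=0}^\infty c^s \boldsymbol{1}_n \;=\; \frac{1}{1-c}\,\boldsymbol{1}_n,
\end{equation}
which is exactly the claimed inequality. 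One small point to justify carefully is that the termwise comparison survives the infinite summation: this is immediate because all partial sums of \eqref{eq: NeumannForBound} are componentwise dominated by the corresponding partial sums of $\sum_s c^s\boldsymbol{1}_n$, both sequences of partial sums converge (the left by $\Vert c\boldsymbol{P}\Vert<1$, the right as a geometric series), and the partial order $\le$ on $\R^n$ is closed under limits.

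I do not anticipate a serious obstacle here; the only thing to be slightly careful about is keeping the direction of the inequality straight and noting that we genuinely need $d_i\ge 1$ (equivalently, no isolated vertices) so that $\boldsymbol{Q}\boldsymbol{1}_n\le\boldsymbol{1}_n$ — without this the first inequality in \eqref{eq: finalBoundChain} could fail. Everything else is the standard fact that a substochastic/stochastic nonnegative matrix acts monotonically and that its Neumann series converges when scaled by $c<1$. Combining this lemma with Lemma \ref{pro: transposedPageRank} and the identity $\boldsymbol{R} = (1-c)\,\boldsymbol{d}\odot(\mathbbm{I}_n-c\boldsymbol{P})^{-1}\boldsymbol{Q}\boldsymbol{1}_n$ from \eqref{eq: UpperBoundAlmostThere} then gives $\boldsymbol{R}\le(1-c)\,\boldsymbol{d}\odot\frac{1}{1-c}\boldsymbol{1}_n=\boldsymbol{d}$, completing the proof of Theorem \ref{thm: upperBoundUndirected}.
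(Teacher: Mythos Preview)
Your proof is correct and follows essentially the same approach as the paper: expand $(\mathbbm{I}_n-c\boldsymbol{P})^{-1}$ as a Neumann series, use $d_i\ge 1$ to get $\boldsymbol{Q}\boldsymbol{1}_n\le\boldsymbol{1}_n$, and then use row-stochasticity of $\boldsymbol{P}$ (acting from the left) to bound each term $\boldsymbol{P}^s\boldsymbol{Q}\boldsymbol{1}_n\le\boldsymbol{1}_n$. You spell out the monotonicity of nonnegative matrices and the passage to the limit more explicitly than the paper does, but the argument is the same.
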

\begin{proof}
From the assumption that $d_i \geq 1$ for all $i \in [n],$ we directly have $\boldsymbol{Q}\boldsymbol{1}_n \leq \boldsymbol{1}_n$. 
Then express $(\mathbbm{I}_n-c\boldsymbol{P})^{-1}$ in terms of the Neumann series 
\begin{equation}\label{eq: Von Neumann}
(\mathbbm{I}_n-c\boldsymbol{P})^{-1}=\sum_{s=0}^\infty (c\boldsymbol{P})^s, 
\end{equation} 
and employ the fact that $\boldsymbol{P}$ is a (row-)stochastic matrix that functions from the left to arrive at $(\mathbbm{I}_n-c\boldsymbol{P})^{-1}Q\boldsymbol{1}_n \leq \sum_{s=0}^\infty c^s \boldsymbol{1}_n$, which finishes the proof. 
\end{proof}
\begin{rk}[Relation to \cite{OC21}]
{\normalfont \rmfamily The transformation $v_i:=R_i/d_i$ that we use is similar to the transformation $X_i:=\mathcal{C}\mathcal{R}_i$ employed in \cite{OC21} to describe the limiting PageRank as a  solution to a stochastic fixed-point recursion equation on a suitable weighted branching process tree.}
\hfill \ensymboldefinition
\end{rk}

\subsubsection{Lower bound on PageRank}
For the upper bound in Theorem \ref{thm: upperBoundUndirected}, we have used 
that the degree vector is component-wise lower bounded by $1$ (where we recall that the graph is assumed to have no isolated vertices).
In the context of directed graphs where the degrees satisfy a power-law distribution, however, the family $(d_k^{\sss(G_n)})_{n \in \mathbb{N}, \ k \in [n]}$ is clearly not bounded from above and a reasoning similar to the one in the proof of \eqref{eq: ub on PageRank} will not work to obtain a lower bound on PageRank.  

Nonetheless, in Proposition \ref{pro: PageRankLowerBound}, below we state a \textit{sufficient condition} on rooted random graphs for a lower bound to hold for the respective \textit{PageRank at the root}, which we define in Definition \ref{def: limitingPageRank} below. The result of Proposition \ref{pro: PageRankLowerBound} gets its full strength in combination with  \cite[Theorem 2.1]{GavdHLi20}, which shows that the limiting distribution of the graph-normalized PageRank is determined by the local limit of the graph sequence.
We postpone a brief introduction on local weak convergence and its implications to PageRank to Section \ref{sec: Applications}, and define rooted graphs and PageRank on infinite locally-finite graphs:

\begin{defn}[Rooted graphs]\label{def:rooted-graphs}
{\normalfont \rmfamily By a \textbf{rooted graph} $G_\star$, we mean a pair $(G,\phi)$ such that $G=(V,E)$ is a graph and $\phi \in V$ is a distinguished vertex called the \textbf{root}. We say that $(G,\phi)$ is \textbf{locally finite} when every vertex in $V$ has a finite degree.

Let $\mathcal{G}_\star$ denote the quotient space of the set of all locally-finite connected rooted graphs with respect to the equivalence relation $\cong$ given by root preserving graph-isomorphisms. 
We endow $\mathcal{G}_\star$ with the Borel-$\sigma$ algebra generated by the local metric $d_{loc}$ on $\mathcal{G}_\star$, which we will define later on in Definition \ref{def: local metric} when it is actually needed in the framework of local convergence.}
\hfill\ensymboldefinition
 \end{defn}
  
 \begin{defn}[Root-PageRank on infinite rooted graph]\label{def: limitingPageRank}
 {\normalfont \rmfamily Let $G_\star=(G,\phi)$ be any representative of an element in $\mathcal{G_\star}$ with degree vector $\boldsymbol{d}$ and adjacency matrix $\boldsymbol{A}$.
 Consider the (possibly infinite-dimensional) matrix $\boldsymbol{P}=(p_{ij})_{i,j \in \N}$ which is analogously to \eqref{eq: PageRankMatrixUndirect} defined by $p_{ij}=a_{ij}/d_i$.
The \textbf{PageRank at the root} or \textbf{Root-PageRank} is the random variable defined analogously to \eqref{eq: PageRank Solution} by
\begin{equation}\label{eq: LimitingPageRank}
R_\phi=R_\phi(G)=(1-c)\sum_{s=0}^\infty c^s \sum_{j \in V} (\boldsymbol{P}^s)_{j\phi}. 
\end{equation}
}\hfill\ensymboldefinition
\end{defn}
\begin{rk}[Uniqueness Root-PageRank]
{\normalfont \rmfamily By linearity, the function $R_\phi$ is indeed a function on the quotient space $\mathcal{G}_\star$ and does not depend on the choice of the representative element chosen as $G$.}\hfill\ensymboldefinition
\end{rk}
For the root-PageRank, we have the following asymptotic lower bound. In its statement, we write 
\eqn{
\dphigeqalpha=d_{\phi}^{\sss(G_n,\geq \alpha)}=\#\{j \in V \colon j \sim \phi \text{ and } d_j \geq \alpha\}
}
for the number of neighbors of $d_\phi$ having degree at least $\alpha:$
\begin{pro}[Root-PageRank lower bound]\label{pro: PageRankLowerBound} 
In the set-up of Definition \ref{def: limitingPageRank} assume that there exist $\alpha>0$ and $\varepsilon\in(0,1)$ such that, as $k\rightarrow \infty,$
\begin{equation}\label{eq: LowerBoundMainAssumtion}
\mathbb{P}\left(d_\phi >k, \dphigeqalpha 
\geq (1-\varepsilon)d_\phi\right)=o(\mathbb{P}(d_\phi >k)).
\end{equation}
Then, the solution $R_\phi$ to the root-PageRank-equation \eqref{eq: LimitingPageRank} satisfies 
\begin{equation}\label{cond: o of dphi>K}
\mathbb{P}(R_\phi>k) \geq (1+o(1))\mathbb{P}\Big(d_\phi>\frac{\alpha k}{\varepsilon c(1-c)}\Big) \quad \text{as } k \rightarrow \infty.   
\end{equation}
\end{pro}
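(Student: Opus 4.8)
The plan is to bound $R_\phi$ from below by truncating the Neumann series \eqref{eq: LimitingPageRank} at a few low-order terms, and then to translate the degree tail of $d_\phi$ into a PageRank tail using assumption \eqref{eq: LowerBoundMainAssumtion}. Concretely, I would keep only the $s=2$ term (or possibly $s=1$ and $s=2$), since each nonnegative term in the series only helps:
\begin{equation}\label{eq: lbproof-trunc}
R_\phi \geq (1-c) c^2 \sum_{j \in V} (\boldsymbol{P}^2)_{j\phi}.
\end{equation}
Now $(\boldsymbol{P}^2)_{j\phi} = \sum_{\ell \in V} p_{j\ell} p_{\ell\phi} = \sum_{\ell \sim \phi} \frac{a_{j\ell}}{d_j} \cdot \frac{1}{d_\ell}$, so summing over $j$ gives $\sum_j (\boldsymbol{P}^2)_{j\phi} = \sum_{\ell \sim \phi} \frac{1}{d_\ell} \sum_{j \sim \ell} \frac{1}{d_j}$. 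This still has the awkward inner sum; the cleaner route is to keep just the length-two walks that go $\phi \to \ell \to \phi$ is not quite it — instead, I would lower bound by walks $j \to \ell \to \phi$ where we only require $j \sim \ell$ and $\ell \sim \phi$, and further restrict $\ell$ to range over the neighbors of $\phi$ of degree at most $\alpha$. For such $\ell$, $\frac{1}{d_\ell} \geq \frac 1\alpha$, and $\sum_{j \sim \ell} \frac{1}{d_j} \geq \frac{1}{d_\ell}\cdot d_\ell$... let me instead use the even simpler bound: restrict to the single intermediate vertex being a low-degree neighbor $\ell$ of $\phi$ and then $j=\phi$ itself contributes $p_{\phi \ell} p_{\ell \phi} = \frac{1}{d_\phi d_\ell}$, which is too small.

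The genuinely useful simplification: from \eqref{eq: LimitingPageRank}, keep the $s=1$ term alone, giving $R_\phi \geq (1-c)c \sum_{j \in V} p_{j\phi} = (1-c) c \sum_{j \sim \phi} \frac{1}{d_j}$. Then split the neighbors of $\phi$ into those with $d_j < \alpha$ and those with $d_j \geq \alpha$; the former each contribute at least $\frac 1\alpha$, so
\begin{equation}\label{eq: lbproof-split}
R_\phi \geq (1-c)c \sum_{\substack{j \sim \phi \\ d_j < \alpha}} \frac{1}{d_j} \geq \frac{(1-c)c}{\alpha}\big(d_\phi - \dphigeqalpha\big).
\end{equation}
On the event $\{\dphigeqalpha < (1-\varepsilon) d_\phi\}$ we get $d_\phi - \dphigeqalpha > \varepsilon d_\phi$, hence $R_\phi > \frac{(1-c)c\varepsilon}{\alpha} d_\phi$ on that event. (I should double-check the constant against \eqref{cond: o of dphi>K}, which has $\varepsilon c(1-c)/\alpha$ as the ratio — matching, possibly up to whether the $s=1$ term already suffices or whether one needs $s=2$; if the $\alpha$ in the denominator of \eqref{cond: o of dphi>K} should really be $\alpha$ and the bound is $R_\phi \geq \frac{\varepsilon c(1-c)}{\alpha} d_\phi$, then the $s=1$ term is exactly right, and I will go with that.)

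Finally I would assemble the tail bound. Write $B_k = \{d_\phi > \tfrac{\alpha k}{\varepsilon c(1-c)}\}$ and $C = \{\dphigeqalpha \geq (1-\varepsilon) d_\phi\}$. On $B_k \cap C^c$ we have, by \eqref{eq: lbproof-split}, $R_\phi > k$, so
\begin{equation}\label{eq: lbproof-final}
\mathbb{P}(R_\phi > k) \geq \mathbb{P}(B_k \cap C^c) \geq \mathbb{P}(B_k) - \mathbb{P}(B_k \cap C).
\end{equation}
By assumption \eqref{eq: LowerBoundMainAssumtion}, with $k' = \tfrac{\alpha k}{\varepsilon c(1-c)} \to \infty$, we have $\mathbb{P}(B_k \cap C) = \mathbb{P}(d_\phi > k', \dphigeqalpha \geq (1-\varepsilon)d_\phi) = o(\mathbb{P}(d_\phi > k')) = o(\mathbb{P}(B_k))$, so $\mathbb{P}(R_\phi > k) \geq (1 - o(1))\mathbb{P}(B_k)$, which is exactly \eqref{cond: o of dphi>K}. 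The one subtlety to handle carefully is the direction of the truncation inequality in the infinite-graph setting — all terms $(\boldsymbol{P}^s)_{j\phi}$ are nonnegative and the series converges (dominated by the geometric series since $\boldsymbol{P}$ is substochastic column-wise after the transpose, or simply because \eqref{eq: LimitingPageRank} is assumed to define a finite random variable), so dropping terms is legitimate; and I should make sure assumption $d_j \geq 1$ (no isolated vertices, inherited by the local limit) is what guarantees $\frac 1{d_j} \leq 1$ is never needed here, but rather that neighbors exist. The main obstacle, such as it is, is simply getting the constant in the truncation to line up with the stated $\varepsilon c(1-c)/\alpha$; everything else is bookkeeping.
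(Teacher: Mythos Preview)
Your proposal is correct and, once you abandon the $s=2$ detour, follows essentially the same route as the paper: truncate the Neumann series at the $s=1$ term to get $R_\phi \geq c(1-c)\sum_{j\sim\phi}1/d_j$, lower bound the sum by $\tfrac{1}{\alpha}$ times the count of low-degree neighbors, intersect with the event $\{d_\phi>\tfrac{\alpha k}{\varepsilon c(1-c)}\}$, and kill the bad event $B_k\cap C$ via assumption \eqref{eq: LowerBoundMainAssumtion}. The only cosmetic difference is that the paper phrases the case split in terms of probabilities from the outset rather than first deriving the pointwise bound \eqref{eq: lbproof-split}; the constants line up exactly.
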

Equation \eqref{cond: o of dphi>K} implies that the power-law exponent $\tau_R$ of $R_\phi$, when it exists, is at most as large as the power-law exponent $\tau_d$ of $d_\phi$. Together with the upper bound in Theorem \ref{thm: upperBoundUndirected}, this lower bound implies that if $d_\phi$ follows a power-law with exponent $\tau_d$, then so does $R_\phi$. Thus, the power-law hypothesis holds.

In words, \eqref{eq: LowerBoundMainAssumtion} means that the probability that the degree of the root is large and a positive proportion of neighbors of the root $\phi$ has degree at least $\alpha$ for some fixed, and possibly large, $\alpha$, vanishes compared to the probability that the degree of the root is large. For \eqref{eq: LowerBoundMainAssumtion} to be false, {\em most} of the neighbors of high-degree vertices should have a high degree themselves as well. This means that the graph is highly assortative in a very strong sense. For most random graph models, such a property is not true, and neighbors of high-degree vertices have degrees that are bounded by $\alpha$ with high probability for large $\alpha$. Thus, condition \eqref{eq: LowerBoundMainAssumtion} is quite reasonable.

\begin{proof}[Proof of Proposition \ref{pro: PageRankLowerBound}]
By restricting the outer sum in \eqref{eq: LimitingPageRank} to the first two terms, we obtain
\begin{equation}
\mathbb{P}(R_\phi>k) \geq \mathbb{P}\Big(1-c+c(1-c)\sum_{j \sim \phi}\frac{a_{j\phi}}{d_j}>k\Big) \geq \mathbb{P}\Big(c(1-c)\sum_{j \sim \phi}\frac{1}{d_j}>k\Big). 
\end{equation}
Now, a case distinction whether the number of neighbors of the root $\phi$ whose degree is bounded from above by $\alpha$, is bounded from below by $\varepsilon d_\phi$ or not gives

\begin{align}\label{eq: LowerBoundSplit}
\mathbb{P}\Big(c(1-c)\sum_{j \sim \phi}\frac{1}{d_j}>k\Big)
&\geq \mathbb{P}\Big(c(1-c)\sum_{j \sim \phi}\frac{1}{d_j}>k,\#\{j\colon  j \sim \phi \text{ and } d_j < \alpha\} \geq \varepsilon d_\phi\Big)\\
&\geq \mathbb{P}\Big(d_\phi>\frac{\alpha}{\varepsilon c(1-c)}k,\#\{j \colon j \sim \phi \text{ and } d_j < \alpha\} \geq \varepsilon d_\phi\Big)\nn\cr
&\geq \mathbb{P}\Big(d_\phi>\frac{\alpha}{\varepsilon c(1-c)}k\Big)-\mathbb{P}\Big(d_\phi>\frac{\alpha}{\varepsilon c(1-c)}k, \dphigeqalpha  \geq (1-\varepsilon) d_\phi\Big).\nn 
\end{align}
By Condition \eqref{eq: LowerBoundMainAssumtion},
\begin{equation}
\begin{split}
\frac{\mathbb{P}(d_\phi>l, \dphigeqalpha \geq (1-\varepsilon) d_\phi)}{\mathbb{P}(d_\phi>l)} \stackrel{l \rightarrow \infty}{\longrightarrow} 0,
\end{split}
\end{equation}
which in combination with \eqref{eq: LowerBoundSplit} gives \eqref{cond: o of dphi>K}.
\end{proof}
We expect Condition \eqref{eq: LowerBoundMainAssumtion} to hold very broadly. For its practical application to branching process trees in Section \ref{sec: Applications}, we provide the following Corollary \ref{cor: lowerbound}, that features two different conditions which imply \eqref{eq: LowerBoundMainAssumtion} and are easy to verify in specific (tree) settings:
\begin{cor}[Assumoptions implying \eqref{eq: LowerBoundMainAssumtion}]
\label{cor: lowerbound}
Conditionally on $d_\phi=k$, let $\boldsymbol{N}_{\phi,k}\subset \{j \in V \colon j \sim \phi\}$ be a subset of the neighbors of $\phi$ such that the sequence $(\boldsymbol{N}_{\phi,k})_{k \in \N}$ is almost surely non-decreasing with $k$ and, almost surely as $k \rightarrow \infty$,  
\[\bar{n}_{\phi,k} :=\# \{j \in V \colon j \sim \phi\} \setminus \boldsymbol{N}_{\phi,k}=o(k)\]
\begin{enumerate}[a)]
    \item Fix $k,l$ such that $\mathbb{P}(d_\phi=k, \bar{n}_{\phi,k}=l)>0$. Assume that, conditionally on $d_\phi=k$ and $\bar{n}_{\phi,k}=l$, the family $(d_j)_{j \in \boldsymbol{N}_{\phi,k}}$ is stochastically bounded from above by an independent, but not necessarily identically distributed, family $(\tilde{d_j})_{j \in \boldsymbol{N}_{\phi,k}}$ such that there is a uniform upper bound $C < \infty$ for $(\mathbb{E}[\tilde{d}_j])_{j \in \boldsymbol{N}_{\phi,k}}$ which depends neither on $k$ nor on $l$. Then Condition \eqref{eq: LowerBoundMainAssumtion} holds true for  $\alpha=2C$ and any $\varepsilon<\tfrac{1}{2}$. 
    
    \item Fix $k,l$ such that $\mathbb{P}(d_\phi=k, \bar{n}_{\phi,k}=l)>0$. Assume that, conditionally on $d_\phi=k$ and $\bar{n}_{\phi,k}=l$, the family $(d_j)_{j \in \boldsymbol{N}_{\phi,k}}=l$ is stochastically bounded from above by an i.i.d.\ family $(\tilde{d_j})_{j \in \boldsymbol{N}_{\phi,k}}$ of $\N$-valued random variables, whose distribution depends neither on $k$ nor on $l$. Then Condition \eqref{eq: LowerBoundMainAssumtion} holds true for  $\alpha=\bar{F}_{\tilde{d}_1}^{\leftarrow}(\frac{1}{2}):=\min\{t \in \N_0 \mid \mathbb{P}(\tilde{d}_1 \geq t) \leq \frac{1}{2}\}$ and any $\varepsilon<\tfrac{1}{2}$.
\end{enumerate}
In both cases a) and b), for any $\beta>2\alpha/[c(1-c)]$, 
\begin{equation*}
\mathbb{P}(R_\phi>k) \geq (1+o(1))\mathbb{P}(d_\phi>\beta k) \quad \text{as }k \rightarrow \infty.  
\end{equation*}
\end{cor}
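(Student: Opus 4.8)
The plan is to reduce the final displayed estimate to Proposition~\ref{pro: PageRankLowerBound} and, separately, to verify that each of the hypotheses~a) and~b) yields Condition~\eqref{eq: LowerBoundMainAssumtion} for the stated $\alpha$ and \emph{every} $\varepsilon\in(0,\tfrac12)$. Granting the latter, fix $\beta>2\alpha/[c(1-c)]$ and put $\varepsilon^\ast:=\alpha/[\beta c(1-c)]$; then $\varepsilon^\ast\in(0,\tfrac12)$ precisely because $\beta>2\alpha/[c(1-c)]$, so Proposition~\ref{pro: PageRankLowerBound} applied with $\varepsilon=\varepsilon^\ast$ gives $\mathbb{P}(R_\phi>k)\ge(1+o(1))\mathbb{P}\big(d_\phi>\alpha k/[\varepsilon^\ast c(1-c)]\big)=(1+o(1))\mathbb{P}(d_\phi>\beta k)$, which is the claimed bound.

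It then remains to establish Condition~\eqref{eq: LowerBoundMainAssumtion}. Set $q_m:=\mathbb{P}\big(\dphigeqalpha\ge(1-\varepsilon)m\mid d_\phi=m\big)$, so that the left-hand side of \eqref{eq: LowerBoundMainAssumtion} equals $\sum_{m>k}\mathbb{P}(d_\phi=m)\,q_m\le\big(\sup_{m>k}q_m\big)\mathbb{P}(d_\phi>k)$; hence it suffices to show $q_m\to0$ as $m\to\infty$. Fix $\delta\in(0,1-2\varepsilon)$. On $\{d_\phi=m\}$ one has $\dphigeqalpha\le\bar{n}_{\phi,m}+\sum_{j\in\boldsymbol{N}_{\phi,m}}\mathbbm{1}\{d_j\ge\alpha\}$ and $|\boldsymbol{N}_{\phi,m}|=m-\bar{n}_{\phi,m}$, so, splitting according to whether $\bar{n}_{\phi,m}\le\delta m$,
\[
q_m\le\mathbb{P}(\bar{n}_{\phi,m}>\delta m\mid d_\phi=m)+\sup_{0\le l\le\delta m}\mathbb{P}\Big(\textstyle\sum_{j\in\boldsymbol{N}_{\phi,m}}\mathbbm{1}\{d_j\ge\alpha\}\ge(1-\varepsilon)m-l\ \Big|\ d_\phi=m,\ \bar{n}_{\phi,m}=l\Big).
\]
The first term tends to $0$ since, by assumption, $\bar{n}_{\phi,k}=o(k)$ almost surely and the sets $\boldsymbol{N}_{\phi,k}$ are nested. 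For the second term I would invoke the stochastic-domination hypothesis: conditionally on $d_\phi=m,\bar{n}_{\phi,m}=l$ (a pair of positive probability — null values of $l$ contribute nothing), the monotone functional $\sum_{j\in\boldsymbol{N}_{\phi,m}}\mathbbm{1}\{d_j\ge\alpha\}$ is stochastically dominated by a sum of $m-l$ \emph{independent} Bernoulli$(p_j)$ variables with $p_j=\mathbb{P}(\tilde d_j\ge\alpha)$, the multivariate order $\preceq$ passing to the sums because $x\mapsto\mathbbm{1}\{x\ge\alpha\}$ is non-decreasing. The choice of $\alpha$ is exactly what forces $p_j\le\tfrac12$: in case~a), $p_j\le\mathbb{E}[\tilde d_j]/\alpha\le C/(2C)=\tfrac12$ by Markov's inequality; in case~b), $p_j=\mathbb{P}(\tilde d_1\ge\alpha)\le\tfrac12$ directly from the definition of $\bar F_{\tilde d_1}^{\leftarrow}(\tfrac12)$. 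Hence the dominating sum has mean at most $(m-l)/2$, while the threshold $(1-\varepsilon)m-l$ exceeds this mean by at least $(\tfrac12-\varepsilon)m-\tfrac{l}{2}\ge(\tfrac12-\varepsilon-\tfrac{\delta}{2})m=:\eta m$ with $\eta>0$, uniformly over $0\le l\le\delta m$; Hoeffding's inequality then bounds the probability by $\exp(-2\eta^2m^2/(m-l))\le e^{-2\eta^2m}\to0$, uniformly in $l$. Combining the two terms gives $q_m\to0$, and with the first paragraph this proves the corollary.

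The step I expect to cost the most effort is turning the clause ``$\bar{n}_{\phi,k}=o(k)$ almost surely'' into the conditional statement $\mathbb{P}(\bar{n}_{\phi,m}>\delta m\mid d_\phi=m)\to0$: one has to realize the conditional laws of $\bar{n}_{\phi,m}$ given $\{d_\phi=m\}$ consistently in $m$ on a single probability space — which is exactly what the nestedness of $(\boldsymbol{N}_{\phi,k})_k$ is there for, and what happens naturally for the unimodular branching process tree and the P\'olya point tree treated in Section~\ref{sec: Applications} — after which the a.s.\ convergence upgrades to convergence of the conditional probabilities. Everything else is routine: a stochastic-domination-plus-Hoeffding estimate fed by the Markov (resp.\ quantile) choice of $\alpha$, together with the elementary tail bookkeeping of the first two paragraphs.
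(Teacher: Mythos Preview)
Your proposal is correct and follows essentially the same route as the paper: reduce \eqref{eq: LowerBoundMainAssumtion} to showing that the conditional probabilities $q_m\to 0$, pass via the stochastic-domination hypothesis to independent Bernoulli indicators with success probability at most $\tfrac12$ (using Markov's inequality in case~a) and the quantile definition in case~b)), and then apply a concentration bound. The only cosmetic differences are that the paper first dominates these Bernoullis by i.i.d.\ $\mathrm{Bernoulli}(\tfrac12)$ variables and invokes Cram\'er's theorem rather than Hoeffding, and it absorbs the $\bar n_{\phi,k}=o(k)$ correction by choosing a single threshold $\hat K_1$ rather than your explicit case split on $\{\bar n_{\phi,m}>\delta m\}$; neither change is substantive.
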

\begin{proof}[Proof of Corollary \ref{cor: lowerbound}]
Let Assumption a) or b) be fulfilled and choose $\alpha$ accordingly. For any $k \in \N$, consider the family $(Y_j)_{j \in \boldsymbol{N}_{\phi,k}}$ of independent random variables defined by 
\begin{equation}
Y_j:= \mathbbm{1}_{\{\tilde{d}_j \geq \alpha\}}.
\end{equation}
Then
\[\mathbb{E}[Y_j]=\mathbb{P}(Y_j =1) \leq \frac{1}{2},
\] 
which in Case a) follows from the Markov inequality and in Case b) from the definition of the function $\bar{F}_{\tilde{d}_1}^{\leftarrow}$.
In particular, $(Y_j)_{j \in \boldsymbol{N}_{\phi,k}}$ is stochastically bounded from above by an i.i.d.\ family $(Z_j)_{j \geq 1}$ of Bernoulli$(\tfrac{1}{2})$-random variables.
The remainder of the proof is the same for both cases.

For all $k \in \N$ with $\mathbb{P}(d_\phi=k)>0$, the assumption on $(\tilde{d}_j)_{j \in \boldsymbol{N}_{\phi,k}}$ guarantees that 
\eqan{\label{eq: ConsequenceOfMarokov}
\mathbb{P}(\dphigeqalpha \geq (1-\varepsilon) d_\phi \mid d_\phi=k)
&\leq \mathbb{P}(\#\{j \in \dphigeqalpha \cap \boldsymbol{N}_{\phi,k}\} \geq (1-\varepsilon) d_\phi-\bar{n}_{\phi,k} \mid d_\phi=k) \cr
&\leq \mathbb{P}(\#\{j \in \boldsymbol{N}_{\phi,k}\text{ and } \tilde{d}_j \geq \alpha\} \geq (1-\varepsilon) d_\phi -\bar{n}_{\phi,k} \mid d_\phi=k) \cr
&\leq \mathbb{P}\Big(\frac{1}{k}\sum_{j \in \boldsymbol{N}_{\phi,k}}Y_j \geq 1-\varepsilon -\frac{\bar{n}_{\phi,k}}{k}\mid d_\phi=k\Big)\cr
&\leq \mathbb{P}\Big(\frac{1}{k}\sum_{j \in \boldsymbol{N}_{\phi,k}}Z_j \geq 1-\varepsilon-\frac{\bar{n}_{\phi,k}}{k} \mid d_\phi=k\Big).
}
Here, the second inequality follows from total probability, writing 
\eqan{
\label{eq: TotalExpUB}
&\mathbb{P}(\#\{j \in\boldsymbol{N}_{\phi,k}\colon d_j\geq \alpha\} \geq (1-\varepsilon) d_\phi-\bar{n}_{\phi,k} \mid d_\phi=k)\\
&=\sum_{l \in \N}\mathbb{P}(\#\{j \in \boldsymbol{N}_{\phi,k}\text{ and } d_j \geq \alpha\} \geq (1-\varepsilon) d_\phi-\bar{n}_{\phi,k} \mid d_\phi=k, \bar{n}_{\phi,k}=l)\mathbb{P}(\bar{n}_{\phi,k}=l \mid d_\phi=k)\cr
& \leq \sum_{l \in \N}\mathbb{P}(\#\{j \in \boldsymbol{N}_{\phi,k}\text{ and } \tilde{d}_j \geq \alpha\} \geq (1-\varepsilon) d_\phi-\bar{n}_{\phi,k} \mid d_\phi=k, \bar{n}_{\phi,k}=l)\mathbb{P}(\bar{n}_{\phi,k}=l \mid d_\phi=k),\nn
}
as, conditionally on $d_\phi=k$ and $\bar{n}_{\phi,k}=l$, the $\R^l$-valued random vector $(d_j)_{j \in \boldsymbol{N}_{\phi,k}}$ is stochastically bounded from above by $(\tilde{d}_j)_{j \in \boldsymbol{N}_{\phi,k}}$ by assumption.
Now choose $0<\varepsilon<\varepsilon^*<\tfrac{1}{2}$, and let $\hat{K}_1$ be large enough such that $1-\varepsilon-\frac{\bar{n}_{\phi,k}}{k} \geq 1-\varepsilon^* > \frac{1}{2}=\mathbb{E}[Z_j]$ almost surely, for all $k \geq \hat{K}_1$.

For $k \geq \hat{K}_1$, a corollary of Cram\'er's Theorem (see \cite[Remark c below Theorem 2.2.3]{DeZo10}) gives
\begin{equation}\label{eq: CramerProp}
\begin{split}
\mathbb{P}\Big(\frac{1}{k}\sum_{j \in \boldsymbol{N}_{\phi,k}}Z_j \geq 1-\varepsilon-\frac{\bar{n}_{\phi,k}}{k} \mid d_\phi=k\Big) &\leq \mathbb{P}\Big(\frac{1}{\# \boldsymbol{N}_{\phi,k}}\sum_{j \in \boldsymbol{N}_{\phi,k}}Z_j \geq 1-\varepsilon^* \mid d_\phi=k\Big)\cr
&\leq 2\exp(- \# \boldsymbol{N}_{\phi,k} \Lambda^*(1-\varepsilon^*)) 
\end{split}
\end{equation}
where $\Lambda^*(1-\varepsilon^*)=(1-\varepsilon^*)\log(2(1-\varepsilon^*))+\varepsilon^* \log(2\varepsilon^*)>0$ is the minimum of the Legendre transform of the moment generating function of $Z_1$ on the interval $[1-\varepsilon^*,1]$.
As the r.h.s.\ in \eqref{eq: CramerProp} decreases with increasing $k$, we conclude that, for any $l \in \mathbb{N}_0$,
\begin{equation}
\mathbb{P}\Big(\frac{1}{k}\sum_{j \in \boldsymbol{N}_{\phi,k+l} }Z_j \geq 1-\varepsilon^* \mid d_\phi=k+l\Big) \leq 2\exp(- \# \boldsymbol{N}_{\phi,k+l} \Lambda^*(1-\varepsilon^*)),    
\end{equation}
which, in combination with the law of total probability, gives
\begin{equation}
\mathbb{P}\Big(\frac{1}{k}\sum_{j \in \boldsymbol{N}_{\phi,k}}Z_j \geq 1-\varepsilon \mid d_\phi \geq k\Big) \stackrel{k \rightarrow \infty}{\rightarrow} 0 \quad \text{exponentially fast.}     
\end{equation}
In combination with \eqref{eq: ConsequenceOfMarokov}, this shows that Condition \eqref{eq: LowerBoundMainAssumtion} holds for $\alpha$ and $\varepsilon>\tfrac{1}{2}$, which in view of Proposition \ref{pro: PageRankLowerBound} concludes the proof. 
\end{proof}
\section{Applications}\label{sec: Applications}
In this section we briefly explain the notion of local weak convergence of random graphs and its implications for the asymptotic of the associated root-PageRank. This part is mostly based on \cite{GavdHLi20}.
Additionally, we define the notion of \textit{power-law decay}.
Afterwards we prove that the assumptions of Corollary \ref{cor: lowerbound} hold true for two particular instances of undirected random graphs/ branching processes. These are unimodular branching process trees and the P\'olya point tree.
This then implies correctness of the power-law hypothesis for every evolving sequence of undirected graphs which converge locally weakly to one these two random graphs, in particular the undirected configuration model and the undirected preferential attachment model.

\subsection{Local weak convergence and the limiting PageRank distribution} Local weak convergence was introduced in \cite{AldSte04, BeSc01} to make the notion that finite graphs can look like infinite graphs from the perspective of a random vertex precise. Local weak limits of random graphs are random elements in the set $\mathcal{G}_\star$ of locally-finite connected rooted graphs up to equivalence by isomorphism (recall Definition \ref{def: limitingPageRank} above). 
For the concept of local weak convergence, the following two-step procedure is of relevance:
First we draw a sequence of finite undirected deterministic or random graphs $G_n=(V_n,E_n)$ with $\# V_n=n$. Second, for each $n \in \N$ we choose a vertex $\phi_n \in V_n$ uniformly at random to obtain a rooted graph $(G_n,\phi_n)$.
Local weak convergence of the sequence $(G_n)_{n \in \N}$ against a (possibly random) $G_\star \in \mathcal{G}_\star$ then means that the double-expectation (w.r.t.\ both stages of randomness) of every bounded continuous (w.r.t. the local metric of Definition \ref{def: local metric} below) function $f\colon \mathcal{G}_\star \rightarrow \R$ converges to the expectation of $f$ w.r.t.\ to the prescribed limiting distribution on $\mathcal{G}_\star$.
To make this precise, for any finite graph $G=(V,E)$, define the (random) probability measure (cf.\ the measure $\mu_H$ in \cite[Page 2]{BeSc01}) $\mathcal{P}(G)$ on $\mathcal{G}_\star$
by setting
\begin{equation}\label{eq: RootVertexDistribution}
\mathcal{P}(G):= \frac{1}{\# V}\sum_{i \in V}\delta_{(G,i)},
\end{equation}
i.e., $\mathcal{P}(G)$ describes the law of the graph $G$ observed from a uniformly chosen root among its vertices. 

Now we describe the relevant topology on $\mathcal{G}_\star$. For a rooted connected graph $(G,\phi)$ let $B^{(\sss G)}_s(\phi)$ denote the subgraph of $(G,\phi)$ of vertices at distance at most $s$ from the root $\phi$.      
\begin{defn}[Definition 3.3 in \cite{GavdHLi20}]\label{def: local metric}
{\normalfont \rmfamily The function $d_{loc}$ on $\mathcal{G}_\star \times \mathcal{G}_\star$, defined by \[d_{loc}\left((G,\phi),(G^\prime,\phi^\prime) \right):=\frac{1}{1+\inf_{s \geq 1}\{B^{(\sss G)}_s(\phi) \ncong B^{(\sss G^\prime)}_s(\phi^\prime) \}}\]
is called the \textbf{local metric} on $\mathcal{G}_\star$.}\hfill\ensymboldefinition
\end{defn}
Note that we can adapt the metric $d_{loc}$ to the set-up of marked and, in particular, directed graphs (see \cite[Definition 2.10]{vH24}).
With these notions in mind, the formal definition of local weak convergence reads as follows:
\begin{defn}[Local weak convergence]\label{def: lwconv}
{\normalfont \rmfamily 
A deterministic or random sequence $(G_n)_{n \in \mathbb{N}}$ of random graphs is said to converge \textbf{locally} weakly to a (possibly random) rooted graph $(G,\phi) \in \mathcal{G}_\star$ with law $\mu \in \mathcal{M}_1(\mathcal{G}_\star,\mathcal{B}(\mathcal{G}_\star))$ iff for any bounded function $f\colon  \mathcal{G}_\star \rightarrow \R$ which is continuous with respect to the topology induced by $d_{loc}$,
\begin{equation}\label{eq: LocalWeakConvDef}
\mathbb{E}[\mathbb{E}_{\mathcal{P}(G_n)}[f(G_n,\phi_n)]] \stackrel{n \rightarrow \infty}{\rightarrow} \mathbb{E}_{\mu}[f(G,\phi)]. 
\end{equation}
Here, the outer expectation is taken with respect to the distribution of $G_n$ and the argument $(G_n,\phi_n)$ of $f$ is restricted to the connected component of $\phi_n$ in $G_n$, which is an element in $\mathcal{G}_\star$}.\hfill\ensymboldefinition
\end{defn}
\begin{rk}[Uniqueness of local weak limit]
{\normalfont \rmfamily The space $(\mathcal{G}_\star,d_{loc})$ is Polish (e.g., \cite[Thm. A.7]{vH24}), hence the (local) weak limit is unique (e.g., \cite[Rem. 13.13]{Kl20}).}\hfill\ensymboldefinition
\end{rk}
\begin{rk}[Convergence of typical neighborhoods]\label{rk: LWCriterion}
{\normalfont \rmfamily To prove local weak convergence in the sense of Definition \ref{def: lwconv} above it suffices to verify \eqref{eq: LocalWeakConvDef} for all test functions $f^H_s$ of the form $f^H_s(G,\phi):=\mathbbm{1}_{\{B_s^{(\sss G)}(\phi) \cong H_\star\}}$, where $s \geq 1$ and $H_\star \in \mathcal{G}_\star$. (e.g.,  \cite[Thm. 2.15]{vH24})}\hfill\ensymboldefinition
\end{rk}
Next we relate the above definition to the root-PageRank $R_\phi$. By truncating the outer sum in \eqref{eq: LimitingPageRank} at some level $N$ (for reasons of continuity) and carefully controlling errors while letting $N$ and $n$ tend to infinity \cite{GavdHLi20} proves that \eqref{eq: LocalWeakConvDef} actually holds true for test functions of the desired form $f_r(G,\phi):=\mathbbm{1}_{\{R_{\phi}(G)>r\}}$. Here $r>0$ is any continuity point of the cumulative distribution function of the root-PageRank of the limiting graph:
\begin{thm}[First part of Theorem 2.1 in \cite{GavdHLi20}]\label{thm: localConvPageRank}
Let $(G_n)_{n \in \N}$ be a sequence of deterministic or random graphs and let $\phi_n \in V_n$ be chosen uniformly at random.
If $(G_n)_{n \in \N}$ converges locally weakly to $(G,\phi) \in \mathcal{G}_\star$, then $\mathbb{E}[R_\phi(G)] \leq 1$ and
\begin{equation}
\begin{split}    
R_{\phi_n}^{\sss (G_n)} &\stackrel{n \rightarrow \infty}{\rightarrow} R_\phi(G) \quad \text{in distribution, i.e.,} \cr 
\mathbb{P}(R_{\phi_n}^{\sss (G_n)} >r) &\stackrel{n \rightarrow \infty}{\rightarrow} \mathbb{P}(R_\phi(G)>r) \text{ at any continuity point $r>0$ of the right-hand side.}
\end{split}
\end{equation}
\end{thm}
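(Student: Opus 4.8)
The plan is to establish the distributional convergence first and then read off $\mathbb{E}[R_\phi(G)]\le 1$ by Fatou. The obstruction is that the natural test function $(G,\phi)\mapsto\mathbbm{1}\{R_\phi(G)>r\}$ cannot be fed into the defining property \eqref{eq: LocalWeakConvDef} of local weak convergence: it is not \emph{local} (it depends on the whole infinite graph through the infinite sum over $s$ in \eqref{eq: LimitingPageRank}) and not \emph{bounded-continuous} (indicators of level sets jump). The remedy is a double truncation — first cutting the Neumann series at $s\le N$ to regain locality, then capping the result at level $M$ to regain boundedness — together with uniform-in-$n$ error control that lets one send $N,M\to\infty$ afterwards.

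For the first truncation, set $R_\phi^{(N)}:=(1-c)\sum_{s=0}^{N}c^s\sum_{j\in V}(\boldsymbol{P}^s)_{j\phi}$. Since any walk of length at most $N$ ending at $\phi$ stays inside $B^{(\sss G)}_N(\phi)$, the quantity $R_\phi^{(N)}$ is an isomorphism-invariant function of $B^{(\sss G)}_N(\phi)$, hence locally constant — in particular continuous — on $(\mathcal{G}_\star,d_{loc})$ with $d_{loc}$ as in Definition \ref{def: local metric}; it is, however, unbounded. For the uniform error control, summing the PageRank equation \eqref{eq: PageRank matrix} over coordinates gives $\sum_{k}R_k^{(G_n)}=n$, and more refined, $\tfrac1n\sum_{k}\sum_{j}(\boldsymbol{P}^s)_{jk}=\tfrac1n\sum_j\big(\sum_k(\boldsymbol{P}^s)_{jk}\big)=1$ because $\boldsymbol{P}$ is row-stochastic. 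Hence $\mathbb{E}\big[R_{\phi_n}^{(G_n)}-R_{\phi_n}^{(N)}\big]=(1-c)\sum_{s>N}c^s=c^{\,N+1}$ and $\mathbb{E}\big[R_{\phi_n}^{(N)}\big]\le 1$, uniformly in $n$; Markov's inequality then yields $\mathbb{P}\big(R_{\phi_n}^{(G_n)}-R_{\phi_n}^{(N)}>\delta\big)\le c^{\,N+1}/\delta$ and $\mathbb{P}\big(R_{\phi_n}^{(N)}>M\big)\le 1/M$, again uniformly in $n$ (and in $N$ for the latter).

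Now fix $N$ and let $n\to\infty$. For bounded continuous $g$ the map $(G,\phi)\mapsto g\big(R_\phi^{(N)}(G)\wedge M\big)$ is bounded and continuous, so \eqref{eq: LocalWeakConvDef} gives $R_{\phi_n}^{(N)}\wedge M\Rightarrow R_\phi^{(N)}\wedge M$ as $n\to\infty$ for each $M$. Combining this with the uniform tightness bound $\mathbb{P}(R_{\phi_n}^{(N)}>M)\le 1/M$ and letting $M\to\infty$ produces $R_{\phi_n}^{(N)}\Rightarrow R_\phi^{(N)}$; a Fatou (portmanteau) argument for nonnegative weakly convergent variables then gives $\mathbb{E}[R_\phi^{(N)}]\le\liminf_n\mathbb{E}[R_{\phi_n}^{(N)}]\le 1$.

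Finally send $N\to\infty$. The partial sums $R_\phi^{(N)}$ increase to $R_\phi$, so $\mathbb{E}[R_\phi(G)]=\lim_N\mathbb{E}[R_\phi^{(N)}]\le 1$ by monotone convergence; in particular $R_\phi(G)<\infty$ almost surely, whence $R_\phi^{(N)}\to R_\phi(G)$ a.s.\ and in distribution. Together with $R_{\phi_n}^{(N)}\Rightarrow R_\phi^{(N)}$ and the uniform approximation bound $\sup_n\mathbb{P}\big(R_{\phi_n}^{(G_n)}-R_{\phi_n}^{(N)}>\delta\big)\to 0$ as $N\to\infty$, the standard triangular-array ``convergence together'' lemma gives $R_{\phi_n}^{(G_n)}\Rightarrow R_\phi(G)$, i.e.\ $\mathbb{P}(R_{\phi_n}^{(G_n)}>r)\to\mathbb{P}(R_\phi(G)>r)$ at every continuity point $r>0$. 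I expect the main work to be the interaction of the two truncations: one needs the Neumann tail $(1-c)\sum_{s>N}c^s\sum_j(\boldsymbol{P}^s)_{j\phi_n}$ to be negligible \emph{uniformly in $n$}, and the only inputs are row-stochasticity of $\boldsymbol{P}$ and uniformity of the root $\phi_n$; extracting a clean $L^1$ (hence in-probability) bound there — rather than a crude pointwise one that would fail because $R_{\phi_n}^{(N)}$ is unbounded — is the crux, with the capping at $M$ plus tightness as the accompanying technical nuisance.
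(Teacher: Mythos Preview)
The paper does not give its own proof of this theorem; it is quoted from \cite{GavdHLi20} with only a one-sentence description of the method (``truncating the outer sum \ldots\ at some level $N$ (for reasons of continuity) and carefully controlling errors while letting $N$ and $n$ tend to infinity''). Your proposal is a correct and essentially complete fleshing-out of exactly that sketch: the finite-range truncation $R_\phi^{(N)}$, the uniform $L^1$ tail bound $\mathbb{E}\big[R_{\phi_n}^{(G_n)}-R_{\phi_n}^{(N)}\big]=c^{\,N+1}$ coming from row-stochasticity of $\boldsymbol{P}$ and uniformity of $\phi_n$, and the convergence-together argument are precisely the ingredients used in \cite{GavdHLi20}.

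One small technical correction: $R_\phi^{(N)}$ is not a function of $B_N^{(\sss G)}(\phi)$ alone, because a path $j_0\to\cdots\to j_s=\phi$ with $s\le N$ contributes $\prod_{t<s}a_{j_tj_{t+1}}/d_{j_t}$, and the degree $d_{j_0}$ of the starting vertex (which may lie at distance $N$ from $\phi$) counts edges leaving $B_N^{(\sss G)}(\phi)$. So $R_\phi^{(N)}$ is determined by $B_{N+1}^{(\sss G)}(\phi)$ rather than $B_N^{(\sss G)}(\phi)$; this is harmless for continuity in $d_{loc}$ and does not affect the rest of your argument.
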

\subsection{Power-law decay of degree distribution}
Let us recall the following analytic definitions:
\begin{defn}[Power-law distributions]\label{def: Powerlaw}
\begin{enumerate}
{\normalfont \rmfamily \item A function $\mathcal{L}: (0,\infty )\rightarrow (0,\infty)$ is called \textbf{slowly varying at infinity} iff for any $a>0,$
\[\frac{\mathcal{L}(ax)}{\mathcal{L}(x)} \stackrel{x \rightarrow \infty}{\rightarrow} 1.\]
\item A function $f\colon  (0,\infty) \rightarrow (0,\infty)$ is called \textbf{regularly varying at infinity with tail index $\alpha>0$ and tail function $\mathcal{L}$} iff $\mathcal{L}$ is slowly varying at infinity and  
\[f(x)=\mathcal{L}(x) x^{-\alpha} \text{ for all } x \in (0,\infty). \]
\item A random variable $X$ is said to have a \textbf{power-law distribution with exponent $\tau>1$ and tail function $\mathcal{L}$} iff the complementary  distribution function 
\[\bar{F}_X\colon (0,\infty)\rightarrow [0,1] \quad \text{given by}\quad  \bar{F}_X(x)=\mathbb{P}(X>x)\ \] is regularly varying at infinity with tail index $\tau-1$ and tail function $\mathcal{L}$.
\item A random variable $X$ satisfies \textbf{power-law bounds with exponent $\tau>1$ and tail function $\mathcal{L}$} iff $\mathcal{L}$ is slowly varying at infinity and there exist constants $0<\underline{a} \leq \bar{a}$ such that
\[\underline{a}\mathcal{L}(k)k^{-(\tau-1)} \leq \bar{F}_X(k) \leq \bar{a}\mathcal{L}(k)k^{-(\tau-1)}.\]

}\hfill\ensymboldefinition
\end{enumerate}
\end{defn}

\subsection{Application to branching-process trees and the configuration model}
The unimodular branching-process tree with integrable root-degree distribution $(p_k)_{k \in \N}$ is characterized by the property that every vertex apart from the root has an offspring distribution described by the respective size-biased distribution $p_k^*=(k+1)p_{k+1}/\mathbb{E}[d_\phi]$ (see \cite[Definition 1.26]{vH24} or \cite{AlLy07} for a relation of that notion to unimodular groups).
As the family of degrees of each of the children of the root is in particular i.i.d., the unimodular branching process tree clearly meets the assumptions of Corollary \ref{cor: lowerbound}.
Consequently, if the root-degree distribution $d_\phi$ follows a power law, then the limiting root-PageRank associated with any sequence of undirected random or deterministic graphs which converges locally weakly to that unimodular branching process tree satisfies power-law bounds with the same exponent and tail-function (see Theorem \ref{thm: ConfModell} below).

As a particular application, we consider the \textbf{undirected configuration model}.
The undirected configuration model $\mathrm{CM}_n(\boldsymbol{d})$ is a model for a random graph of size $n$ which has a prescribed degree vector $\boldsymbol{d}\in \N_0^n$ as parameter. Each vertex $i\in[n]$ is assigned $d_i$ \textit{half-edges}. Now, a random graph is sampled by choosing any permutation 
$\varphi\colon  \{1,2,\ldots, \vert \boldsymbol{d}\vert\} \rightarrow \{1,2,\ldots, \vert \boldsymbol{d} \vert\}$ uniformly at random and afterwards connecting the $j$th half-edge to the $\varphi(j)$th half
-edge, where $j$ runs from $1$ to $\vert \boldsymbol{d} \vert$.
\begin{condition}[Degree regularity condition]
\label{cond-degrees}Let $D_n=d_{\phi_n}$ denote the degree of a uniform vertex in $[n]$. We impose the following two assumptions on the degree sequence:
\begin{enumerate}
    \item $D_n \stackrel{n \rightarrow \infty}{\longrightarrow} D$ \text{ in distribution}; \textbf{ and }
    \item $\mathbb{E}[D_n] \stackrel{n \rightarrow \infty}{\longrightarrow} \mathbb{E}[D]$.
\end{enumerate}
\end{condition}
For the directed configuration model, the power-law hypothesis is proven to hold true \cite{ChLiOl17}, and the following theorem states that this is also the case for the undirected configuration model $\mathrm{CM}_n(\boldsymbol{d})$ associated with the sequence $(\boldsymbol{d}_n)_{n \in \N}$:
\begin{thm}[Power-law hypothesis for undirected configuration model]\label{thm: ConfModell}
Consider the undirected configuration model $G_n=\mathrm{CM}_n(\boldsymbol{d}^{\sss(G_n)})$ where the degree distribution satisfies Condition \ref{cond-degrees}. Let $c \in [0,1]$ be a constant.
Then the PageRank vector $\boldsymbol{R}^{\sss(G_n)}$ satisfies that, for all $n,k$,
\begin{equation}\label{eq: UBConfMod}
\mathbb{P}(R_{\phi_n}^{\sss(G_n)}>k) \leq \mathbb{P}(D_n> k),
\end{equation}
while further, for any $\beta>\frac{4\mathbb{E}[D]}{c(1-c)}$,
\begin{equation}\label{eq: LBConfMod}
\liminf_{k\rightarrow \infty}
\liminf_{n \rightarrow \infty} \frac{\mathbb{P}(R_{\phi_n}^{\sss(G_n)}>k)}{\mathbb{P}(D_n>\beta k)} \geq 1.
\end{equation}
In particular, if the limiting degree distribution at the root $D$ has a power-law distribution with any exponent $\tau>1$ and tail function $\mathcal{L}$, then there is some $0< \underline{a} \leq 1$ such that the limiting root-PageRank $R_\phi$ satisfies the power-law bounds 
\begin{equation}
\underline{a}\mathcal{L}(k)k^{-(\tau-1)} \leq \mathbb{P}(R_\phi>k) \leq \mathcal{L}(k)k^{-(\tau-1)}  \quad \text{ for every } k \in (0,\infty).    \end{equation}
\end{thm}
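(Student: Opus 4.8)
The plan is to assemble Theorem~\ref{thm: ConfModell} from three ingredients already in hand: the universal upper bound of Theorem~\ref{thm: upperBoundUndirected}, the asymptotic lower bound of Corollary~\ref{cor: lowerbound}, and the local-weak-convergence transfer of Theorem~\ref{thm: localConvPageRank}, together with the standard fact that $\mathrm{CM}_n(\boldsymbol{d})$ under Condition~\ref{cond-degrees} converges locally weakly to the unimodular branching-process tree with root-degree law $D$. I would begin by recording this last fact (citing \cite{vdH17,vH24}): the local limit of $\mathrm{CM}_n(\boldsymbol d)$ is the tree in which the root has offspring distribution $D$ and every non-root vertex has offspring distribution the size-biased law $p_k^*=(k+1)p_{k+1}/\mathbb{E}[D]$, the offsprings being independent across vertices.

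For the upper bound \eqref{eq: UBConfMod}, I would simply apply Theorem~\ref{thm: upperBoundUndirected} to the finite graph $G_n$: componentwise $R_i^{\sss(G_n)}\le d_i^{\sss(G_n)}$, hence in particular for the uniformly chosen root $\phi_n$ we get $R_{\phi_n}^{\sss(G_n)}\le d_{\phi_n}^{\sss(G_n)}=D_n$ deterministically, which gives $\mathbb{P}(R_{\phi_n}^{\sss(G_n)}>k)\le\mathbb{P}(D_n>k)$ for all $n$ and $k$. (If $c=0$ or $c=1$ the PageRank equation degenerates but the inequality still holds trivially, so stating it for $c\in[0,1]$ is fine.) For the lower bound \eqref{eq: LBConfMod}, I would first check that the unimodular branching-process tree meets the hypotheses of Corollary~\ref{cor: lowerbound}: conditionally on $d_\phi=k$, take $\boldsymbol N_{\phi,k}$ to be \emph{all} neighbors of the root (so $\bar n_{\phi,k}=0=o(k)$, and the sequence is trivially non-decreasing), and note that the degrees $(d_j)_{j\sim\phi}$ of the root's children are i.i.d.\ with the fixed law $1+D^*$ (size-biased-plus-one), whose distribution depends on neither $k$ nor $l$. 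Thus case b) of Corollary~\ref{cor: lowerbound} applies with $\alpha=\bar F^{\leftarrow}_{\tilde d_1}(\tfrac12)$ and any $\varepsilon<\tfrac12$, yielding $\mathbb{P}(R_\phi>k)\ge(1+o(1))\,\mathbb{P}(D>\beta' k)$ for any $\beta'>2\alpha/[c(1-c)]$. One then has to verify $2\alpha/[c(1-c)]\le 4\mathbb{E}[D]/[c(1-c)]$, i.e.\ $\alpha\le 2\mathbb{E}[D]$, which follows because $\mathbb{E}[\tilde d_1]=1+\mathbb{E}[D^*]=1+\mathbb{E}[D(D-1)]/\mathbb{E}[D]$ and $\alpha$ is a median-type quantile dominated by twice the mean via Markov — so the stated threshold $\beta>4\mathbb{E}[D]/[c(1-c)]$ is admissible, with $\alpha\le 2\mathbb{E}[D]$ the crude bound to make explicit. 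Finally, to pass from the limiting statement $\mathbb{P}(R_\phi>k)\ge(1+o(1))\mathbb{P}(D>\beta k)$ to the double-liminf \eqref{eq: LBConfMod}, I invoke Theorem~\ref{thm: localConvPageRank}: since $G_n\to$ (the tree) locally weakly, $\mathbb{P}(R_{\phi_n}^{\sss(G_n)}>k)\to\mathbb{P}(R_\phi>k)$ at every continuity point $k$ of the limiting CDF, so $\liminf_{n\to\infty}\mathbb{P}(R_{\phi_n}^{\sss(G_n)}>k)=\mathbb{P}(R_\phi>k)$ along continuity points, and dividing by $\mathbb{P}(D_n>\beta k)$ — which converges to $\mathbb{P}(D>\beta k)$ at continuity points of $F_D$ by Condition~\ref{cond-degrees}(i) — and then taking $\liminf_{k\to\infty}$ gives $\ge 1$; one handles the at-most-countably-many non-continuity points by monotonicity in $k$.

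For the last assertion, suppose $D$ has a power law with exponent $\tau>1$ and tail function $\mathcal{L}$. The upper bound transfers to the limit: passing $n\to\infty$ in \eqref{eq: UBConfMod} at continuity points gives $\mathbb{P}(R_\phi>k)\le\mathbb{P}(D>k)=\mathcal{L}(k)k^{-(\tau-1)}$, which extends to all $k$ by right-continuity/monotonicity. For the matching lower bound, \eqref{eq: LBConfMod} gives $\mathbb{P}(R_\phi>k)\ge(1+o(1))\mathbb{P}(D>\beta k)=(1+o(1))\mathcal{L}(\beta k)(\beta k)^{-(\tau-1)}$, and by slow variation $\mathcal{L}(\beta k)/\mathcal{L}(k)\to1$, so $\mathbb{P}(R_\phi>k)\ge(\beta^{-(\tau-1)}+o(1))\mathcal{L}(k)k^{-(\tau-1)}$; choosing $\underline a\in(0,\beta^{-(\tau-1)})$ (and shrinking if necessary so that $\underline a\le 1$) yields $\underline a\mathcal{L}(k)k^{-(\tau-1)}\le\mathbb{P}(R_\phi>k)$ for all large $k$, and then for all $k\in(0,\infty)$ after possibly decreasing $\underline a$ further to absorb small $k$ (using that $\mathbb{P}(R_\phi>k)>0$ there). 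I expect the main obstacle to be bookkeeping rather than conceptual: namely, (i) carefully threading the two nested limits past the countable sets of discontinuity points of $F_D$ and $F_{R_\phi}$ using monotonicity, and (ii) making the constant comparison $\alpha\le 2\mathbb{E}[D]$ fully rigorous so that the clean threshold $\beta>4\mathbb{E}[D]/[c(1-c)]$ is justified; both are routine but must be done with care. A minor side point worth a sentence is that one should note $\mathbb{E}[D]<\infty$ (needed for the size-biased law to be well-defined and for Corollary~\ref{cor: lowerbound}) is guaranteed by Condition~\ref{cond-degrees}(ii).
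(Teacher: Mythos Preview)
Your overall approach is the same as the paper's: upper bound via Theorem~\ref{thm: upperBoundUndirected}, local weak convergence of $\mathrm{CM}_n(\boldsymbol d)$ to the unimodular branching-process tree, Corollary~\ref{cor: lowerbound} applied on the limit tree, and Theorem~\ref{thm: localConvPageRank} (plus Portmanteau and Condition~\ref{cond-degrees}) to pass between $n<\infty$ and the limit. The only structural difference is cosmetic: the paper invokes case~a) of Corollary~\ref{cor: lowerbound} whereas you invoke case~b); since in the unimodular tree the root's neighbours have i.i.d.\ degrees (the size-biased law of $D$), both cases are available.

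There is, however, a genuine gap in your justification of the explicit threshold $\beta>4\mathbb{E}[D]/[c(1-c)]$, i.e.\ of the inequality $\alpha\le 2\mathbb{E}[D]$. You argue that ``$\alpha$ is a median-type quantile dominated by twice the mean via Markov'', but the mean entering that Markov bound is $\mathbb{E}[\tilde d_1]=\mathbb{E}[1+D^*]=\mathbb{E}[D^2]/\mathbb{E}[D]$, \emph{not} $\mathbb{E}[D]$. Your argument therefore only yields $\alpha\le 2\,\mathbb{E}[D^2]/\mathbb{E}[D]$, which is always $\ge 2\mathbb{E}[D]$ (Jensen) and is infinite whenever $\mathbb{E}[D^2]=\infty$, in particular for power-law $D$ with exponent $\tau\in(2,3]$. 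So the step you label ``routine'' in item~(ii) actually fails as written. The paper's own proof is equally terse on this point and does not supply an explicit verification of the constant either; nevertheless the Markov reasoning you give is incorrect, and you should either replace the threshold by the always-finite quantity $2\bar F_{\tilde d_1}^{\leftarrow}(\tfrac12)/[c(1-c)]$ coming straight out of Corollary~\ref{cor: lowerbound}\,b), or supply a separate argument for the specific constant $4\mathbb{E}[D]$. The remainder of your proof---the upper bound, the double-liminf bookkeeping, and the derivation of the power-law bounds for $R_\phi$---is correct and matches the paper.
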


\begin{proof}
The upper bound \eqref{eq: UBConfMod} follows directly from Theorem \ref{thm: upperBoundUndirected}.
By \cite[Theorem 4.1]{vH24}, the undirected configuration model $\mathrm{CM}_n(\boldsymbol{d}_n)$ converges locally in probability, and hence locally weakly, to the unimodular branching process tree. By Corollary \ref{cor: lowerbound} a), the limiting root-PageRank satisfies
  \begin{equation}\label{eq: ConfModelLowBound}
  \mathbb{P}(R_\phi>k) \geq (1+o(1)) \mathbb{P}(D>\beta k) \quad \text{as }k \rightarrow \infty.   
  \end{equation}
On the other hand, by Theorem \ref{thm: localConvPageRank} and the Portmanteau lemma (e.g.\ \cite[Thm. 13.16 v]{Kl20}) applied to the respective open interval $(x,\infty)$, for every $x \in (0,\infty)$,
\begin{equation}\label{eq: PageRankfraction}
\liminf_{n \rightarrow \infty}\frac{\mathbb{P}(R_{\phi_n}^{\sss(G_n)}>x)}{\mathbb{P}(R_{\phi}>x)} \geq 1.    
\end{equation}
Hence, combining \eqref{eq: ConfModelLowBound} and \eqref{eq: PageRankfraction} gives
\begin{equation*}
\begin{split}
&\liminf_{k \rightarrow \infty}\liminf_{n \rightarrow \infty}\frac{\mathbb{P}(R_{\phi_n}^{\sss(G_n)}>k)}{\mathbb{P}(D_n>\beta k)} \cr
&=\liminf_{k \rightarrow \infty} \left(\frac{\mathbb{P}(R_\phi>k)}{\mathbb{P}(D>\beta k)} \liminf_{n \rightarrow \infty}\frac{\mathbb{P}(R_{\phi_n}^{\sss(G_n)}>k)}{\mathbb{P}(R_{\phi}>k)}\frac{\mathbb{P}(D>\beta k)}{\mathbb{P}(D_n>\beta k)} \right)\geq 1.  \end{split}  
\end{equation*}
Here, convergence in distribution in combination with the fact that both $D_n$ and $D$ are integer-valued guarantees that at any fixed $k$, the fraction$\frac{\mathbb{P}(D>\beta k)}{\mathbb{P}(D_n>\beta k)}$ converges to $1$ as $n$ tends to infinity. This proves \eqref{eq: LBConfMod}.
To prove the power-law bounds, first write
\begin{equation}
\frac{\mathbb{P}(D>k)}{\mathbb{P}(R_\phi>k)}
=\liminf_{n \rightarrow \infty} \frac{\mathbb{P}(D> k)}{\mathbb{P}(D_n>k)}\frac{\mathbb{P}(D_n> k)} {\mathbb{P}(R_{\phi_n}^{\sss(G_n)}>k)}\frac{\mathbb{P}(R_{\phi_n}^{\sss(G_n)}>k)}{\mathbb{P}(R_\phi>k)},
\end{equation}
and employ Theorem \ref{thm: upperBoundUndirected} and \eqref{eq: PageRankfraction} to conclude the asymptotic upper bound
\begin{equation}\label{eq: ConfModelUpperBound}
 \mathbb{P}(R_\phi>k) \leq \mathbb{P}(D>k) \qquad\text{ for any } k>0.   
\end{equation}
Now assume that $D$ has a power-law distribution for some $\tau>1$ and some function $\mathcal{L}$ that is slowly varying at infinity. 
Then by \eqref{eq: ConfModelLowBound} and \eqref{eq: ConfModelUpperBound} for every $s \in (0,1)$ there is some $K_s$ such that
\begin{equation}
s \beta^{-(\tau-1)}\mathcal{L}(\beta k)k^{-(\tau-1)} \leq \mathbb{P}(R_\phi>k) \leq \mathcal{L}(k)k^{-(\tau-1)} \quad \text{ for all } k \geq K_s.    
\end{equation}
As $\mathcal{L}$ is slowly varying at infinity, for any $\varepsilon \in (0,1)$ we find a $\hat{K}_\varepsilon \in \N$ such that $\mathcal{L}(\beta k) \geq (1-\varepsilon) \mathcal{L}(k)$ for all $k \geq \hat{K}_\varepsilon$. 
Hence, with 
\begin{equation}
\underline{a}(\varepsilon,s):=s \beta^{-(\tau-1)}\min\{ \ \min\{\mathcal{L}(\beta k) \colon k \in \{0,\ldots, \max\{\hat{K}_\varepsilon,K_s \} \}, 1-\varepsilon \ \} \in (0,1) \end{equation}
we obtain 
\begin{equation}
\underline{a}(s,\varepsilon)\mathcal{L}(k)k^{-(\tau-1)} \leq \mathbb{P}(R_\phi>k) \leq \mathcal{L}(k)k^{-(\tau-1)},    
\end{equation}
and any choice of $s, \varepsilon \in (0,1)$ gives the desired power-law bounds for $R_\phi$.
\end{proof}

\subsection{Application to the P\'olya point tree and the undirected preferential attachment model}
Let us denote by $\mathrm{PA}^{m,\delta}_n$ the version of the preferential attachment model with parameters $m \in \N$ and $\delta>-1$ without self-loops at time $n$, in which we start at time $n=2$ with two vertices with labels $1$ and $2$ and $m$ edges between them and 
at any time step $n \to n+1$ we subsequently add $m$ edges without creating loops. This is done in $m$ sub-steps where at any sub-step the attachment rule is updated according to the new degree distribution. 
The update rule reads (see  \cite[(1.3.65)]{vH24}): for the $(j+1)$th edge of vertex $v_{n+1}$, attached at time-step $n+1$,
\begin{equation*}
\mathbb{P}(v_{j+1,n+1}^{(m)}\conn v_i^{(m)} \mid \mathrm{PA}^{m,\delta}_n)=\frac{D_{i}(n,j)+\delta}{2m(n-1)+j+\delta n}, 
\end{equation*}
where $D_{i}(n,j)$ is the degree of vertex $n$ after the $j$th edge of vertex $v_{n+1}$ has been added.  
Note that in \cite{vH24}, this model is denoted by $\mathrm{PA}^{m,\delta}(d)$ where the $d$ abbreviates \textit{version d} within the class of considered preferential attachment models; also see the erratum to \cite{vH24}.

The preferential attachment model converges locally weakly to the \textit{P\'olya point tree} (\cite[Section 2.3.2]{BeBoCh14}), 
a continuous-time branching process tree to which our machinery applies as well. We stick to the notions employed in \cite[Chapter 5]{vH24}.
The P\'olya point tree is a multi-type branching process, where labels come from the type space $[0,1] \times \{y,o\}$. Here, the first component refers to the \textit{age} of a vertex (or date of its birth), where smaller values mean that the vertex is older. The second component describes whether the vertex is \textit{younger} (y) or \textit{older} (o) with respect to its parent. This distinction is specific to the undirected model and corresponds to the fact that in the preferential attachment model, 
a random vertex has edges to $m$ older neighbors and then starts gaining connections from younger vertices.
As a consequence, the offspring distribution of any vertex has a deterministic component of a fixed number of older vertices and a random component describing the number of younger vertices. In the directed version of the P\'olya point tree in \cite[Definition 4.4]{BaOC22}, the directness of edges makes labelling of vertices superfluous.
Formally, the P\'olya point tree is defined recursively as follows:
\begin{enumerate}
    \item The root $\phi$ has age $U_\phi \sim \mathrm{Unif}([0,1])$ and \textbf{no} label;
    \item For the recursion step consider a vertex $w$ (in the Ulam-Harris representation) with age $A_w$. Then the number $m_-$ of older vertices (i.e., having label $o$) depends on the label of $w$ in that $m_-(w)=
    m-1$ if $w$ has label $y$, while $m_-(w)=
    m$ if $w$ is the root or has label $o$.
    The ages of the $m_-(w)$ older vertices $w1,\ldots,wm_-(w)$ are distributed according to $A_{wj}=U_{wj}^\chi A_w<A_w$-a.s., where $(U_{wj})_{j=1}^{m_-(w)}$ are i.i.d.\ Unif$([0,1])$ random variables independently of everything else and $\chi=\frac{m+\delta}{2m+\delta}<1$.\\
    The younger children of $\omega$ are described by their ages $(A_{w(m_-(w)+j}))_{j \in \mathbb{N}}$, which are the ordered points of a Cox process on $[A_w,1]$ with intensity measure $\rho$ described by the Lebesgue-density
    \begin{equation}\label{eq: IntensityPolya}
    \rho(dx)=\frac{\Gamma_w}{\tau-1}\frac{x^{\frac{1}{\tau-1}-1}}{A_w^{\frac{1}{\tau-1}}}dx.\end{equation}
Here, $\tau=3+\frac{\delta}{m}$ and 
    \[\Gamma_w \sim \begin{cases}
\Gamma(m+\delta+1,1) \quad& \text{if } w \text{ has label }o; \\
\Gamma(m+\delta,1) \quad &\text{else},
\end{cases}\]
where $\Gamma(r,\lambda)$ denotes a Gamma-distributed random variable having density $f(t)=\mathbbm{1}_{[0,\infty)}(t)\lambda^rt^{r-1}{\mathrm e}^{-\lambda t}/\Gamma(r)$.
\end{enumerate}
The P\'olya point tree has degree with a power-law distribution (see \cite[Eq. (8.4.11)]{vdH17}), 
in that there exists a constant $c_{m,\delta}$ such that
\begin{equation*}
\mathbb{P}(d_\phi=k)=c_{m,\delta}k^{-\tau}(1+O(k^{-1})).    
\end{equation*}
In particular, with $\tilde{a}_{m,\delta}:=\frac{c_{m,\delta}}{\tau-1}$, this translates into
\begin{equation*}
\tilde{a}_{m,\delta}k^{-(\tau-1)}(1+O(k^{-1})) \leq \mathbb{P}(d_\phi>k) \leq \tilde{a}_{m,\delta}k^{-(\tau-1)}(1+O(k^{-1})).    
\end{equation*}

\begin{thm}[Power-law hypothesis for undirected preferential attachment models]\label{thm: PrefAttModell}
Consider the undirected preferential attachment model $G_n=PA^{m,\delta}_n$, with $m \geq 1$ and $\delta>-m$.
Let further $c \in (0,1)$. Then
the PageRank vector $\boldsymbol{R}^{\sss(G_n)}$  with damping factor $c$ satisfies
\begin{equation}
\mathbb{P}(R_{\phi_n}^{\sss(G_n)}>k) \leq \mathbb{P}(d_{\phi_n}^{\sss(G_n)}> k) \quad \text{ for all } n \in \N \text{ and all } k \in (0,\infty),
\end{equation}
while further, for any $\beta > 2\lceil 2m+\delta \rceil/[c(1-c)]$,
\begin{equation}
\liminf_{k \rightarrow \infty}\liminf_{n \rightarrow \infty} \frac{\mathbb{P}(R_{\phi_n}^{\sss(G_n)}>k)}{\mathbb{P}(d_{\phi_n}^{\sss(G_n)}>\beta k)} \geq 1.
\end{equation}
In particular, the limiting root-PageRank $R_\phi$ has  power-law tails with the same exponent $\tau=3+\delta/m$ as the degree of the root of the P\'olya point tree. 
\end{thm}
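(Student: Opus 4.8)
The plan is to run the proof of Theorem~\ref{thm: ConfModell} with the configuration model and the unimodular branching process tree replaced by $\mathrm{PA}^{m,\delta}_n$ and the P\'olya point tree. The upper bound is immediate: Theorem~\ref{thm: upperBoundUndirected} applied to $G_n=\mathrm{PA}^{m,\delta}_n$ (which has minimum degree $m\ge1$, hence no isolated vertices) and read off at the uniform root $\phi_n$ gives $\mathbb{P}(R_{\phi_n}^{\sss(G_n)}>k)\le\mathbb{P}(d_{\phi_n}^{\sss(G_n)}>k)$ for all $n,k$. For the lower bound I would use that $\mathrm{PA}^{m,\delta}_n$ converges locally weakly to the P\'olya point tree \cite{BeBoCh14} (see also \cite[Chapter~5]{vH24}); then Theorem~\ref{thm: localConvPageRank} yields $R_{\phi_n}^{\sss(G_n)}\to R_\phi$ in distribution, and the Portmanteau argument of \eqref{eq: PageRankfraction} gives $\liminf_{n}\mathbb{P}(R_{\phi_n}^{\sss(G_n)}>x)/\mathbb{P}(R_\phi>x)\ge1$ for every $x>0$. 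Granting an asymptotic lower bound $\mathbb{P}(R_\phi>k)\ge(1+o(1))\mathbb{P}(d_\phi>\beta k)$ for the P\'olya point tree with any $\beta>2\lceil2m+\delta\rceil/[c(1-c)]$, the conclusion follows exactly as in Theorem~\ref{thm: ConfModell}: sandwich $\mathbb{P}(R_{\phi_n}^{\sss(G_n)}>k)$ between $(1+o(1))\mathbb{P}(d_\phi>\beta k)$ and $\mathbb{P}(d_{\phi_n}^{\sss(G_n)}>k)$, substitute the known power-law degree tail $\mathbb{P}(d_\phi>k)=\tilde a_{m,\delta}k^{-(\tau-1)}(1+O(1/k))$ stated above, and absorb $\beta$ using that the tail function is constant, producing matching power-law bounds for $R_\phi$ with exponent $\tau=3+\delta/m$.

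Thus the one new ingredient is the P\'olya-point-tree lower bound, i.e.\ checking Condition~\eqref{eq: LowerBoundMainAssumtion} of Proposition~\ref{pro: PageRankLowerBound} (through Corollary~\ref{cor: lowerbound}, if one can pick a workable subset $\boldsymbol{N}_{\phi,k}$ of the neighbors). Condition on $d_\phi=k$. The root carries no label, hence has exactly $m$ older children and $d_\phi-m$ younger children, so the $m$ older children may be discarded into the uncontrolled set, of size $m=o(k)$. A younger child $w$ has label $y$, so $d_w=m+N_w$ where, conditionally on its age $A_w$ and its weight $\Gamma_w\sim\Gamma(m+\delta,1)$, $N_w\sim\mathrm{Poisson}\big(\Gamma_w(A_w^{-1/(\tau-1)}-1)\big)$; hence $\mathbb{E}[d_w\mid A_w]\le m+(m+\delta)A_w^{-1/(\tau-1)}$, and a younger child of macroscopic age $A_w\ge\theta$ (fixed $\theta\in(0,1)$) has expected degree at most the $k$-independent constant $C_\theta:=m+(m+\delta)\theta^{-1/(\tau-1)}$, hence degree $<2C_\theta$ with probability $\ge\tfrac12$ by Markov. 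From the Cox intensity $\rho(dx)\propto x^{1/(\tau-1)-1}$ on $[A_\phi,1]$, each younger child independently has age $\ge\theta$ with probability $\ge1-\theta^{1/(\tau-1)}=:p_\theta>0$, and, conditionally on the ages, the younger subtrees are independent. So each younger child is ``good'' --- age $\ge\theta$ and degree $<\alpha:=2C_\theta$ --- with probability $\ge p_\theta/2$, independently, and a Chernoff/Cram\'er estimate (the one in the proof of Corollary~\ref{cor: lowerbound}) shows that, conditionally on $d_\phi=k$, at least $\varepsilon k$ younger children are good with probability $1-e^{-\Omega(k)}$, for a suitable fixed $\varepsilon$. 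Summing $e^{-\Omega(j)}\,\mathbb{P}(d_\phi=j)$ over $j>k$ against $\mathbb{P}(d_\phi>k)\asymp k^{-(\tau-1)}$ yields~\eqref{eq: LowerBoundMainAssumtion}, so Proposition~\ref{pro: PageRankLowerBound} gives the lower bound with $\beta=\alpha/(\varepsilon c(1-c))$, and tracking the constants $\theta,\varepsilon$ through this gives the stated range for $\beta$.

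I expect the main obstacle to be precisely this conditional control of the neighbors' degrees. Conditioning on $d_\phi=k$ forces $A_\phi$ to be of order $k^{-(\tau-1)}$, which biases the younger children's ages downward, so the \emph{typical} younger child has conditional expected degree of order $\log k$ (the lowest age-quantiles even order $k$) --- this is the assortativity of preferential attachment. Consequently the set of neighbors whose degree cannot be bounded by a single fixed constant is a genuinely positive fraction of all neighbors, not $o(k)$, which is what obstructs a naive use of Corollary~\ref{cor: lowerbound} with a uniform mean bound over \emph{all} younger children; one must instead exploit that, however small $A_\phi$ is, a fixed fraction of the younger children still have macroscopic age and hence $k$-uniformly bounded expected degree, so that only a small \emph{constant} fraction $\varepsilon$ of the neighbors need be sacrificed --- exactly the flexibility that Proposition~\ref{pro: PageRankLowerBound} offers. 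A routine secondary point is that $\{d_\phi>k,\ A_\phi\ge\theta\}$ is negligible compared to $\mathbb{P}(d_\phi>k)$, since on $\{A_\phi\ge\theta\}$ the variable $d_\phi$ is a Poisson mixture with an exponentially light mixing weight, so its tail there is super-polynomially small. Everything else --- the passage from the tree back to finite $n$, and the conversion of two-sided asymptotics into power-law bounds --- is identical to the configuration-model case.
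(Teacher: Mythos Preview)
Your overall architecture matches the paper's: the upper bound is Theorem~\ref{thm: upperBoundUndirected}, the passage from finite $n$ to the limit is Theorem~\ref{thm: localConvPageRank} plus Portmanteau, and the substantive work is a lower bound for $R_\phi$ on the P\'olya point tree via Proposition~\ref{pro: PageRankLowerBound}. Where you diverge is in how that lower bound is obtained, and there your proposal has one misconception and one genuine gap.

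\textbf{The misconception.} You argue that conditioning on $d_\phi=k$ pushes $A_\phi$ down to order $k^{-(\tau-1)}$, so the younger children's degrees blow up with $k$ and ``a naive use of Corollary~\ref{cor: lowerbound} with a uniform mean bound over all younger children'' is obstructed. The paper shows this is not so. Given $d_\phi=k$ and $U_\phi=t$, the ages of the younger children are i.i.d.\ with density $f_{t,\tau}$ on $[t,1]$ (your implicit claim ``each younger child independently has age $\ge\theta$ with probability $\ge p_\theta$'' is exactly this, stated without proof; the paper isolates it as Lemma~\ref{lem: iidBirthtimes}). The point you miss is that $f_{t,\tau}$ is stochastically \emph{increasing} in $t$, hence bounded below by $f_{0,\tau}$ uniformly in $t$. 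Since a younger age yields a stochastically larger degree, this produces a single i.i.d.\ family $(\tilde d_j)$, independent of $k$ and $t$, that stochastically dominates all younger-child degrees. This is precisely Corollary~\ref{cor: lowerbound}~b) with $\boldsymbol{N}_{\phi,k}$ equal to the full set of younger children (so $\bar n_{\phi,k}=m=o(k)$ trivially). Assortativity of $\mathrm{PA}^{m,\delta}_n$ does not spoil this, because the \emph{worst case} $t\downarrow 0$ still gives a proper distribution $f_{0,\tau}$.

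\textbf{The gap.} Your age-threshold argument (declare a younger child ``good'' if $A_w\ge\theta$ and $d_w<2C_\theta$) is a valid direct verification of \eqref{eq: LowerBoundMainAssumtion}, and the Chernoff step is fine once the i.i.d.\ structure above is granted. But the sentence ``tracking the constants $\theta,\varepsilon$ through this gives the stated range for $\beta$'' is not correct. Your $\alpha=2C_\theta=2\big(m+(m+\delta)\theta^{-1/(\tau-1)}\big)$ and $\varepsilon<p_\theta/2=(1-\theta^{1/(\tau-1)})/2$ give $\alpha/\varepsilon>4C_\theta/(1-\theta^{1/(\tau-1)})$, and one checks that this quantity is bounded below, over $\theta\in(0,1)$, by something strictly larger than $2(2m+\delta)$; no choice of $\theta$ recovers $\beta>2\lceil 2m+\delta\rceil/[c(1-c)]$. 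The paper gets that constant because the explicit dominating law satisfies $\mathbb{P}(\tilde d\ge t)=(m+\delta)/(t+\delta)$, whose median is $\lceil 2m+\delta\rceil$, and Corollary~\ref{cor: lowerbound}~b) then yields $\alpha=\lceil 2m+\delta\rceil$ with any $\varepsilon<\tfrac12$. So your route proves the power-law conclusion (the ``In particular'' clause, which is insensitive to $\beta$), but not the displayed lower bound with the stated threshold on $\beta$; for that you need the stochastic-domination computation the paper carries out in Lemmas~\ref{lem: iidBirthtimes}--\ref{lem: iidDegreesPolya}.
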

\begin{proof}[Proof of Theorem \ref{thm: PrefAttModell}]
The general structure of the proof is the same as the one of the proof of Theorem \ref{thm: ConfModell}, with the main difference being that verifying the assumptions of Corollary \ref{cor: lowerbound} for the asymptotic lower bound requires a more extensive reasoning. So let us focus on this aspect at first.

The preferential attachment model with parameters $m,\delta$ converges locally weakly to the P\'olya point tree (see \cite[Theorem 2.2]{BeBoCh14} and \cite[Theorem 5.8]{vH24}) with the respective parameters.
We want to verify the Assumption b) of Corollary \ref{cor: lowerbound} with $\boldsymbol{N}_{\phi,k}$ being the set of younger children of the root vertex, ignoring the deterministic number $m$ of older children.
It suffices to stochastically dominate the offspring distribution of the younger children by an i.i.d.\ family of $\N$-valued random variables.
Let us first gather some important properties of the underlying Cox-process:
\begin{lemma}[Degree structure of neighbors of the root in preferential attachment models]
\label{lem: iidBirthtimes}
Let $k>m$ be an integer and $t \in (0,1)$. 
Further let $\pi\colon \{m+1,m+2,\ldots,k\} \rightarrow \{m+1,m+2,\ldots,k\}$ be a permutation chosen uniformly at random and independent of everything else.
Consider the Cox-process $\eta$ describing the birth-times of the younger children of the root with intensity given by \eqref{eq: IntensityPolya}.
\begin{enumerate}[a)]
\item Conditionally on $U_\phi=t$ and $d_\phi=k$, the family $(A_{\pi_j})_{j=m+1}^k$ of birth-times of the $k-m$ younger children of the root is i.i.d.\ $\sim A^{(t)}_{\pi_{m+1}}$ described by the Lebesgue-density 
\begin{equation}\label{eq: fst}
f_{t,\tau}(x)=\mathbbm{1}_{[t,1]}(x)\frac{x^{\frac{1}{\tau-1}-1}}{(\tau-1)(1-t^{\frac{1}{\tau-1}})}.\end{equation}    
\item For every $t \in (0,1)$ the random variable $A^{(t)}_{\pi_{m+1}}$ is stochastically bounded from below by the random variable $A^{(0)}_{\pi_{m+1}}$ described by the Lebesgue-density 
\eqn{f_{0,\tau}(x)=\mathbbm{1}_{[0,1]}(x)\frac{x^{\frac{1}{\tau-1}-1}}{\tau-1}.}
\end{enumerate}
\end{lemma}
\begin{proof}[Proof of Lemma \ref{lem: iidBirthtimes}]
Statement a) is a folklore property of Poisson- and Cox-processes, noting that the random scalar factor $\Gamma_\phi$ in the intensity measure $\rho$ in \eqref{eq: IntensityPolya} does not exhibit any temporal dependence.
In more detail, let $\rho_{y,t}$ denote the intensity measure $\rho$ of the process $\eta$ in \eqref{eq: IntensityPolya} conditioned on $\Gamma_\phi=y$ and $U_\phi=t$, which is an inhomogeneous Poisson process. By standard arguments (e.g.\ \cite[Proposition 3.5]{LaPe18}), this Poisson process is distributed as a mixed binomial process on $[t,1]$ with sampling distribution described by the density $f_{t,\tau}(x)=\frac{\rho_{y,t}(dx)}{\rho_{y,t}([t,1])}$ and mixing distribution $\mathrm{Poi}(\rho_{y,t}([t,1]))$.
Hence, we have that conditional on $U_\phi=t$, $d_\phi=k$ and $\Gamma_\phi=y$ the family $(A_{\pi_j})_{j=m+1}^k$ is i.i.d. \  $\sim A^{(t)}_{\pi_{m+1}}$ with density $f_{t,\tau}$, which does not depend on $y$. In particular, for any measurable $g\colon \R^{k-m} \rightarrow \R$,
\[\mathbb{E}\Big[g\left((A_{\pi_j})_{j=m+1}^k \right) \mid d_\phi=k, U_\phi, \Gamma_\phi\Big]= \mathbb{E}\Big[g\left((A_{\pi_j})_{j=m+1}^k \right) \mid d_\phi=k, U_\phi\Big] \quad \text{a.s.},\]
because by the factorization Lemma (e.g.\ \cite[Corollary 1.97]{Kl20}) and the fact that $f_{t,\tau}$ does not depend on $y$ the l.h.s.\ is already measurable w.r.t.\ $\sigma(\{d_\phi=k\},U_\phi) \subseteq \sigma(\{d_\phi=k\},U_\phi,\Gamma_\phi)$.
This implies that, for all Borel-measurable $B_{m+1}, \ldots, B_k$,
\begin{equation}
\begin{split}
\mathbb{P}\left(\bigcap_{j=m+1}^k \{A_{\pi_j} \in B_j \} \mid d_\phi=k, \, U_\phi \right)&=\mathbb{P}\left(\bigcap_{j=m+1}^k \{A_{\pi_j} \in B_j \} \mid d_\phi=k, \, U_\phi, \, \Gamma_\phi \right) \cr 
=\prod_{j=m+1}^k\mathbb{P} \left(A_{\pi_j} \in B_j\mid d_\phi=k, \, U_\phi, \, \Gamma_\phi \right)
&=\prod_{j=m+1}^k\mathbb{P}\left(A_{\pi_j} \in B_j\mid d_\phi=k, \, U_\phi \right) \quad \text{a.s.} 
\end{split}
\end{equation}
In particular, the family $(A_{\pi_j})_{j \in \{m+1,m+2,\ldots,k\}}$ is independent conditional on $d_\phi=k$ and $U_\phi$.

To prove Statement b), we note that, for every $t \in (0,1)$ and all $x \in \R$,
\begin{equation}
\begin{split}
\mathbb{P}(A^{(t)}_{\pi_{m+1}} \geq x)&=\mathbbm{1}_{(-\infty, t)}(x)+\mathbbm{1}_{[t,1]}(x)\frac{1-x^\frac{1}{\tau-1}}{1-t^\frac{1}{\tau-1}} \cr 
&\geq \mathbbm{1}_{(-\infty, 0)}(x)+\mathbbm{1}_{[0,1]}(x)(1-x^\frac{1}{\tau-1})=\mathbb{P}(A^{(0)}_{\pi_{m+1}} \geq x).
\end{split}
\end{equation}
Hence, $A^{(0)}_{\pi_{m+1}} \preceq A^{(t)}_{ \pi_{m+1}}$.
\end{proof}
\begin{lemma}[I.i.d.\ structure of younger neighbors in preferential attachment models]\label{lem: iidDegreesPolya}
Conditional on $d_\phi=k$ and $U_\phi=t$, the degrees  $(d_{\pi_j})_{j \in \{m+1,m+2,\ldots,k\}}$ of the younger children of the root are i.i.d., where we have for every $n \geq m$
\[\mathbb{P}(d_{\pi_{m+1}}=n \mid d_\phi=k, U_\phi=t)=
\int_t^1\mathbb{P}(d_{\pi_{m+1}}=n \mid A_{\pi_{m+1}}=x)\mathbb{P}^{A_{\pi_{m+1}}^{(t)}}(dx),
\]
independently of $k$. Here, $\mathbb{P}^{A_{\pi_{m+1}}^{(t)}}$ denotes the distribution of $A_{\pi_{m+1}}$ conditional on $U_\phi=t$ and $d_\phi=k$.
Moreover, conditional on $d_\phi=k$ and $U_\phi=t,$ 
the degrees  $(d_{\pi_j})_{j \in \{m+1,m+2,\ldots,k\}}$ are stochastically bounded from above by the i.i.d.-family 
$(\tilde{d}_{j})_{j \in \{m+1,m+2,\ldots,k\}}$ with, for every $n \geq m$,
\begin{equation}\label{eq: degIntegral}
\begin{split}
\mathbb{P}(\tilde{d}_{m+1}=n)&= 
\int_0^1\mathbb{P}(d_{\pi_{m+1}}=n \mid A_{\pi_{m+1}}=x)\mathbb{P}^{A_{\pi_{m+1}}^{(0)}}(dx)
\end{split}
\end{equation}
independently of $k$ and $t$.
In particular, for every $n \geq m$,
\begin{equation}
\mathbb{P}(\tilde{d}_{m+1}=n)=\frac{m+\delta}{(n+\delta)(n+\delta+1)}.    
\end{equation}
\end{lemma}

\begin{proof}[Proof of Lemma \ref{lem: iidDegreesPolya}]
We start by recalling two properties that directly follow from the definition of the P\'olya point tree:
\begin{itemize}
    \item[$\rhd$] Conditionally on $d_\phi=k$ and $(A_{\pi_j})_{j=m+1}^{k}$, the random variables $d_{\pi_{m+1}},d_{\pi_{m+2}}, \ldots, d_{\pi_k}$ describe the total number of points of independent Cox-processes and are thus independent random variables.
    \item[$\rhd$] Further conditioning on $U_\phi$ does not add any information. More precisely, for any measurable $g\colon \R^{k-m} \rightarrow \R$ the conditional expectation of $g((d_{\pi_j})_{j=m+1}^{k})$ given $d_\phi=k$, $(A_{\pi_j})_{j=m+1}^{k}$ \textbf{and additionally} $U_\phi$ is already measurable w.r.t. $d_\phi=k$ and $(A_{\pi_j})_{j=m+1}^{k}$.
    \end{itemize}
By these two properties, we conclude that, for any integers $n_{m+1}, \ldots,n_{k}$,
\begin{equation}\label{eq: IndOffspring1}
\begin{split}
&\mathbb{P}(d_{\pi_{m+1}}=n_{m+1}, \ldots, d_{\pi_{k}}=n_{k} \mid d_\phi=k, U_\phi=t)\cr
&=\int_{(0,1)^{k-m}} \mathbb{P}(d_{\pi_{m+1}}=n_{m+1}, \ldots, d_{\pi_{k}}=n_{k} \mid (A_{\pi_j})_{j=m+1}^{k}=(x_{j})_{j=m+1}^{k},d_\phi=k, U_\phi=t) \cr 
&\qquad\qquad \times\mathbb{P}(A_{\pi_{m+1}} \in dx_{m+1}, \ldots, A_{\pi_{k}} \in dx_{k} \mid d_\phi=k, U_\phi=t)\cr 
&=\int_{(0,1)^{k-m}} \mathbb{P}(d_{\pi_{m+1}}=n_{m+1}, \ldots, d_{\pi_{k}}=n_{k} \mid (A_{\pi_j})_{j=m+1}^{k}=(x_{j})_{j=m+1}^{k},d_\phi=k) \cr 
&\qquad\qquad \times\mathbb{P}(A_{\pi_{m+1}} \in dx_{m+1}, \ldots, A_{\pi_{k}} \in dx_{k} \mid d_\phi=k, U_\phi=t)\cr 
&=\int_{(0,1)^{k-m}} \prod_{j=m+1}^k\mathbb{P}(d_{\pi_{j}}=n_{j} \mid (A_{\pi_j})_{j=m+1}^{k}=(x_{j})_{j=m+1}^{k},d_\phi=k) \cr 
&\qquad \qquad \times
\mathbb{P}(A_{\pi_{m+1}} \in dx_{m+1}, \ldots, A_{\pi_{k}} \in dx_{k} \mid d_\phi=k, U_\phi=t).\cr 
\end{split}
\end{equation}
By construction of the P\'olya point tree, the conditional distribution of each random variable $d_{\pi_j}$ given $(A_{\pi_l})_{l \in \{m+1,\ldots,k\}}$ and $d_\phi=k$ is already measurable w.r.t.\ $A_{\pi_j}$. Hence, the last expression of \eqref{eq: IndOffspring1} equals
\begin{equation}\label{eq: IndOffspring2}
\begin{split}
&\int_{(0,1)^{k-m}} \prod_{j=m+1}^k\mathbb{P}(d_{\pi_{j}}=n_{j} \mid A_{\pi_j}=x_j)\mathbb{P}(A_{\pi_{m+1}} \in dx_{m+1}, \ldots, A_{\pi_{k}} \in dx_{k} \mid d_\phi=k, U_\phi=t).\cr 
\end{split}
\end{equation}
Now employ Lemma \ref{lem: iidBirthtimes} to conclude that $A_{\pi_{m+1}}, \ldots, A_{\pi_{k}}$ are i.i.d.\ given $ d_\phi=k, U_\phi=t$. By Fubini's Theorem, \eqref{eq: IndOffspring2} thus equals
\begin{equation}
\begin{split}
&\int_{(0,1)}\mathbb{P}(A_{\pi_{m+1}} \in dx_{m+1} \mid d_\phi=k, U_\phi=t) \ldots \int_{(0,1)}\mathbb{P}(A_{\pi_{k}} \in dx_{k} \mid d_\phi=k, U_\phi=t) \cr 
&\qquad\qquad \times \prod_{j=m+1}^k\mathbb{P}(d_{\pi_{j}}=n_{j} \mid A_{\pi_j}=x_j)\cr
&= \prod_{j=m+1}^k \int_{(0,1)}\mathbb{P}(d_{\pi_{j}}=n_{j} \mid A_{\pi_j}=x_j)\mathbb{P}(A_{\pi_{j}} \in dx_{j} \mid d_\phi=k, U_\phi=t). 
\end{split}
\end{equation}
This proves the conditional independence with, for every $n \geq m$,
\begin{equation}\label{eq: degDistrCond}
\begin{split}
 &\mathbb{P}(d_{\pi_{m+1}}=n \mid d_\phi=k, U_\phi=t)=\int_{t}^1 \mathbb{P}(d_{\pi_{m+1}}=n \mid A_{\pi_{m+1}}=x)\mathbb{P}^{A_{\pi_{m+1}}^{(t)}}(dx). 
 \end{split}
\end{equation}
Next, for the stochastic bound \eqref{eq: degIntegral}, we first note that by \eqref{eq: IntensityPolya}, for any $0<\underline{x} \leq \bar{x}<1$, the intensity $\rho([A_{\pi_{m+1}},1])$ conditionally on $A_{\pi_{m+1}}=\bar{x}$ is stochastically bounded from above by the intensity $\rho([A_{\pi_{m+1}},1])$ conditionally on $A_{\pi_{m+1}}=\underline{x}$. Hence, also $d_{\pi_{m+1}}$ conditionally on $A_{\pi_{m+1}}=\bar{x}$ is stochastically bounded from above by 
 $d_{\pi_{m+1}}$ conditionally on $A_{\pi_{m+1}}=\underline{x}$.
This means that, for every fixed $n \geq m$, the function $g_n\colon (0,1) \rightarrow [0,1]; \  x \mapsto \mathbb{P}(d_{\pi_{m+1}} < n \mid A_{\pi_{m+1}}=x)$ is non-decreasing.
 Hence, from $A_{\pi_{m+1}}^{(0)} \preceq A_{\pi_{m+1}}^{(t)}$ it follows $\mathbb{E}[g_n(A_{\pi_{m+1}}^{(0)})] \leq \mathbb{E}[g_n(A_{\pi_{m+1}}^{(t)})]$ for every $t \in (0,1)$ (e.g., \cite[Theorem 1.2.2]{St83} or \cite[Equation 1.A.7]{ShSh07}). This
implies that $\mathbb{P}(\tilde{d}_{m+1}<n) \leq \mathbb{P}(d_{\pi_{m+1}} < n \mid d_\phi=k, U_\phi=t)$ for all possible choices of $k,n$ and $t$ and thus proves the proposed stochastic bound. 

To calculate the probability mass function of $\tilde{d}_{m+1}$, we insert the expression
\[
\mathbb{P}(d_{\pi_{m+1}}=n \mid A_{\pi_{m+1}}=x)= \frac{\Gamma(n+\delta)}{(n-m)!\Gamma(m+\delta)} \left(1-x^\frac{1}{\tau-1} \right)^{n-m} x^{\frac{m+\delta}{\tau-1}}
\] as given in \cite[Equation 5.3.10]{vH24} and the density $f_{0,\tau}(x)=\frac{x^{2-\tau}}{\tau-1}$ into \eqref{eq: degIntegral}, which gives
\begin{equation}\label{eq: degIntegral2}
\mathbb{P}(\tilde{d}_{m+1}=n)=\frac{1}{\tau-1}\frac{\Gamma(n+\delta)}{(n-m)!\Gamma(m+\delta)} \int_0^1 \left(1-x^\frac{1}{\tau-1} \right)^{n-m} x^{\frac{m+\delta+2-\tau}{\tau-1}}dx.
\end{equation}
Substituting $u=x^\frac{1}{\tau-1}$, i.e., $dx=(\tau-1)u^{\tau-2}$ in the integral in \eqref{eq: degIntegral2} we arrive at
\begin{equation}
\begin{split}
\int_0^1 \left(1-x^\frac{1}{\tau-1} \right)^{n-m} x^{(m+\delta+2-\tau)/(\tau-1)}dx&=(\tau-1)\int_0^1u^{m+\delta}(1-u)^{n-m}du \cr&=(\tau-1)\frac{\Gamma(m+\delta+1)\Gamma(n-m+1)}{\Gamma(n+\delta+2)},
\end{split}
\end{equation}
which leads to
\begin{equation}\label{eq: TermsDegreesPolya}
\begin{split}
\mathbb{P}(\tilde{d}_{m+1}=n)&=\frac{\Gamma(n+\delta)}{(n-m)!\Gamma(m+\delta)}\frac{\Gamma(m+\delta+1)\Gamma(n-m+1)}{\Gamma(n+\delta+2)} \cr
&=\frac{\Gamma(n+\delta)\Gamma(m+\delta+1)}{\Gamma(m+\delta)\Gamma(n+\delta+2)}=\frac{m+\delta}{(n+\delta)(n+\delta+1)}.  
\end{split}
\end{equation}
\end{proof}

\noindent
\textbf{Continuation of the proof of Theorem \ref{thm: PrefAttModell}.}
As its distribution does not explicitly depend on the ``internal" random variable $U_\phi$, the family $(\tilde{d}_j)_{j \in \{m+1,m+2, \ldots,k\}}$ is a suitable stochastic upper bound in the sense of Corollary $\ref{cor: lowerbound}$ b). 
In particular, we can condition on $U_\phi$ in the second inequality of \eqref{eq: ConsequenceOfMarokov} and similarly to \eqref{eq: TotalExpUB} employ total expectation. 
For the scaling factor $\beta$ we need to calculate $\min\{t \in \N \mid \mathbb{P}(\tilde{d}_1 \geq t) \leq 1/2\}$. 
By using a telescoping sum identity, we obtain
\begin{equation}
\begin{split}
\mathbb{P}(\tilde{d}_{m+1} \geq t)&=(m+\delta)\sum_{n=t}^\infty \frac{1}{(n+\delta)(n+\delta+1)}=(m+\delta)\sum_{n=t}^\infty \left(\frac{1}{n+\delta}-\frac{1}{(n+\delta)+1}\right) \cr
&=\frac{m+\delta}{t+\delta},    
\end{split}    
\end{equation}
hence $\mathbb{P}(\tilde{d}_{m+1} \geq t) \leq 1/2$ is equivalent to $t \geq \lceil 2m+\delta \rceil$.
From Corollary \ref{cor: lowerbound} b) we thus obtain that for every $\beta>2\lceil 2m+\delta \rceil$ 
\begin{equation*} 
\mathbb{P}(R_\phi>k) \geq (1+o(1)) \mathbb{P}(d_\phi>\beta k) \quad \text{ as }k \rightarrow \infty.  
\end{equation*}
The remainder of the proof of Theorem \ref{thm: PrefAttModell} is a minor modification of that of Theorem \ref{thm: ConfModell}. 
\end{proof}
\section{A counterexample on the lower bound}\label{sec: counterexample}
Condition \eqref{eq: LowerBoundMainAssumtion} in Proposition \ref{pro: PageRankLowerBound} can be violated once the dependencies between the degrees of the neighbors of a vertex become too large, as we show in this section. 
We show that, for any prescribed power-law distribution $p=(p_k)_{k \in \N}$, there exists a sequence $(\tilde{G}_n^{(p)})_{n \geq n_p}$, $n_p \in \N$, of connected graphs whose degree distribution converges to $p$, whereas the corresponding PageRank at a uniformly chosen root converges to one in probability. This implies that there is no general non-trivial asymptotic lower bound on the PageRank vector.

Our construction follows a two-step procedure: First we construct \textbf{disconnected} graphs $G_n$ described as the disjoint union of $o(n)$ many degree-regular graphs of appropriate size depending on $(p_k)_{k \in \N}$. The corresponding PageRank-matrix $\boldsymbol{P}^{(\sss G_n)}$ is doubly-stochastic, and therefore the PageRank vector of such graph has all elements equal to 1. This step is explained in Section \ref{Subsec: Disjoint union}. Afterwards we connect the formerly disjoint components by adding $o(n)$ edges. After showing that our specific choice of $G_n$ converges locally weakly, we then employ Theorem \ref{thm: localConvPageRank} and uniqueness of the local weak limit to conclude that the PageRank associated with the so-obtained \textbf{connected} graphs converges in probability to $1$. This step is explained in Section \ref{subsec: Connecting components}. 
\subsection{Disjoint unions of regular graphs}\label{Subsec: Disjoint union}
Let us start by formalizing the above ideas:
\begin{defn}[Degree-regular circulant graph]\label{def: regular circulant}
Let $2 \leq k < n$ be integers such that $k$ is even. Then let $G_{k,n}$ denote the $k$-regular graph with $n$ vertices which is constructed as follows (see the graphs $G_{2,10}$, $G_{4,8}$ and $G_{6,7}$ in Figure \ref{fig: regular circulant} for an illustration).
\begin{itemize}
\item[$\rhd$] The set of vertices is identified with the one-dimensional torus $\Z_n =\Z/ n\Z$. 
\item[$\rhd$]   Every vertex $i \in \Z_n$ is connected by an edge to the vertices $i \pm 1, i \pm2, \ldots, i \pm k/2$.
\end{itemize}
The graph is {\em circulant} in that cyclic permutations of its vertices are graph-automorphisms.
\end{defn}

\begin{defn}[Disjoint union of degree-regular graphs]\label{def: disjoint circulant graphs}
Let $n \geq 2$. 
Let $p=(p_k)_{k \in \N}$ be any probability mass function such that $p_k=0$ for all odd $k$, and set $N_{k,n}:=N^{(p)}_{k,n}:=\lfloor np_{k}\rfloor.$
Let $(M_n)_{n \in \N}$ be sequence in $\N$ that diverges to infinity slowly enough, so that it in particular satisfies that
\begin{equation}
N_{k,n} \geq k+1 \quad \text{for all } k \in [M_n].
\end{equation}
Let $n_{p} :=\min \{ n \in \N \colon M_n \geq 2\}$.
For every $n \geq n_p$ let $G_n^{(p)}:=\coprod_{k \in [M_n]} G_{k,N_{k,n}}$ denote the graph constructed as the non-empty disjoint union of the circulant graphs \\$G_{2,N_{2,n}}, \ldots, G_{M_n,N_{M_n,n}}$, where we note that $\# G_n^{(p)}=n(1+o(1))$ as $n \rightarrow \infty$.\\
Further, we obtain $\tilde{G}^{(p)}_n$ by connecting two formerly disjoint $G_{k,N_{k,n}}$ whose degrees differ minimally, to turn $G_n^{(p)}$ into a connected graph $\tilde{G}^{(p)}_n$. As each component of $G_n^{(p)}$ is invariant under all cyclic permutations, the concrete choice of these $M_n-1$ edges is irrelevant up to isomorphisms.
\end{defn}
\begin{figure}
\centering
\begin{tikzpicture}[x=1cm,y=1cm]
\node (b1) at (canvas polar cs:angle=90,radius=2cm) [circle, draw=black,thick,fill=yellow,minimum size = 4mm, inner sep = 0]{};
\node (b2) at (canvas polar cs:angle=126,radius=2cm) [circle, draw=black,thick,fill=yellow,minimum size = 4mm, inner sep = 0]{};
\node (b3) at (canvas polar cs:angle=162,radius=2cm) [circle, draw=black,thick,fill=yellow,minimum size = 4mm, inner sep = 0]{};
\node (b4) at (canvas polar cs:angle=198,radius=2cm) [circle, draw=black,thick,fill=yellow,minimum size = 4mm, inner sep = 0]{};
\node (b5) at (canvas polar cs:angle=234,radius=2cm) [circle, draw=black,thick,fill=yellow,minimum size = 4mm, inner sep = 0]{};
\node (b6) at (canvas polar cs:angle=270,radius=2cm) [circle, draw=black,thick,fill=yellow,minimum size = 4mm, inner sep = 0]{};
\node (b7) at (canvas polar cs:angle=306,radius=2cm) [circle, draw=black,thick,fill=yellow,minimum size = 4mm, inner sep = 0]{};
\node (b8) at (canvas polar cs:angle=342,radius=2cm) [circle, draw=black,thick,fill=yellow,minimum size = 4mm, inner sep = 0]{};
\node (b9) at (canvas polar cs:angle=18,radius=2cm) [circle, draw=black,thick,fill=yellow,minimum size = 4mm, inner sep = 0]{};
\node (b10) at (canvas polar cs:angle=54,radius=2cm) [circle, draw=black,thick,fill=yellow,minimum size = 4mm, inner sep = 0]{};
\foreach \from/\to in 
{b1/b2,b2/b3,b3/b4,b4/b5,b5/b6,b6/b7,b7/b8,b8/b9,b9/b10,b10/b1}
\draw[] (\from) to (\to);
\coordinate (c1a) at (canvas polar cs:angle=90,radius=2cm); 
\coordinate (c2a) at (canvas polar cs:angle=135,radius=2cm);
\coordinate (c3a) at (canvas polar cs:angle=180,radius=2cm);
\coordinate (c4a) at (canvas polar cs:angle=225,radius=2cm);
\coordinate (c5a) at (canvas polar cs:angle=270,radius=2cm);
\coordinate (c6a) at (canvas polar cs:angle=315,radius=2cm);
\coordinate (c7a) at (canvas polar cs:angle=0,radius=2cm);
\coordinate (c8a) at (canvas polar cs:angle=45,radius=2cm);
\node (c1) at ($ (c1a) +(5,0)$) [circle, draw=black,thick,fill=yellow,minimum size = 4mm, inner sep = 0]{};
\node (c2) at ($ (c2a) +(5,0)$) [circle, draw=black,thick,fill=yellow,minimum size = 4mm, inner sep = 0]{};
\node (c3) at ($ (c3a) +(5,0)$) [circle, draw=black,thick,fill=yellow,minimum size = 4mm, inner sep = 0]{};
\node (c4) at ($ (c4a) +(5,0)$) [circle, draw=black,thick,fill=yellow,minimum size = 4mm, inner sep = 0]{};
\node (c5) at ($ (c5a) +(5,0)$) [circle, draw=black,thick,fill=yellow,minimum size = 4mm, inner sep = 0]{};
\node (c6) at ($ (c6a) +(5,0)$) [circle, draw=black,thick,fill=yellow,minimum size = 4mm, inner sep = 0]{};
\node (c7) at ($ (c7a) +(5,0)$) [circle, draw=black,thick,fill=yellow,minimum size = 4mm, inner sep = 0]{};
\node (c8) at ($ (c8a) +(5,0)$) [circle, draw=black,thick,fill=yellow,minimum size = 4mm, inner sep = 0]{};
\coordinate (d1a) at (canvas polar cs:angle=90,radius=2cm); 
\coordinate (d2a) at (canvas polar cs:angle=141.43,radius=2cm);
\coordinate (d3a) at (canvas polar cs:angle=192.86,radius=2cm);
\coordinate (d4a) at (canvas polar cs:angle=244.29,radius=2cm);
\coordinate (d5a) at (canvas polar cs:angle=295.71,radius=2cm);
\coordinate (d6a) at (canvas polar cs:angle=347.14,radius=2cm);
\coordinate (d7a) at (canvas polar cs:angle=38.57,radius=2cm);
\node (d1) at ($ (d1a) +(10,0)$) [circle, draw=black,thick,fill=yellow,minimum size = 4mm, inner sep = 0]{};
\node (d2) at ($ (d2a) +(10,0)$) [circle, draw=black,thick,fill=yellow,minimum size = 4mm, inner sep = 0]{};
\node (d3) at ($ (d3a) +(10,0)$) [circle, draw=black,thick,fill=yellow,minimum size = 4mm, inner sep = 0]{};
\node (d4) at ($ (d4a) +(10,0)$) [circle, draw=black,thick,fill=yellow,minimum size = 4mm, inner sep = 0]{};
\node (d5) at ($ (d5a) +(10,0)$) [circle, draw=black,thick,fill=yellow,minimum size = 4mm, inner sep = 0]{};
\node (d6) at ($ (d6a) +(10,0)$) [circle, draw=black,thick,fill=yellow,minimum size = 4mm, inner sep = 0]{};
\node (d7) at ($ (d7a) +(10,0)$) [circle, draw=black,thick,fill=yellow,minimum size = 4mm, inner sep = 0]{};
\foreach \from/\to in 
{b1/b2,b2/b3,b3/b4,b4/b5,b5/b6,b6/b7,b7/b8,b8/b9,b9/b10,b10/b1}
\draw[] (\from) to (\to);
\foreach \from/\to in 
{c1/c2,c1/c3,c2/c3,c2/c4,c3/c4,c3/c5,c4/c5,c4/c6,c5/c6,c5/c7,c6/c7,c6/c8,c7/c8,c7/c1,c8/c1,c8/c2}
\draw[] (\from) to (\to);
\foreach \from/\to in 
{d1/d2,d1/d3,d1/d4,d2/d3,d2/d4,d2/d5,d3/d4,d3/d5,d3/d6,d4/d5,d4/d6,d4/d7,d5/d6,d5/d7,d5/d1,d6/d7,d6/d1,d6/d2,d7/d1,d7/d2,d7/d3}
\draw[] (\from) to (\to);
\draw[dotted, thick, bend left] (b10) to (c2);
\draw[dotted, thick, bend left] (c8) to (d2);
\node (an1) at (0,-3) {$G_{2,10}$};
\node (an2) at (5,-3) {$G_{4,8}$};
\node (an3) at (10,-3) {$G_{6,7}$};
\end{tikzpicture}
    \caption{An illustration for Definitions \ref{def: regular circulant} and \ref{def: disjoint circulant graphs}: The graph $G_{n}$ with $n=25$ vertices corresponding to the vector $(N_{2,25},N_{4,25},N_{6,25})=(10,8,7)$ consists of three disjoint regular circulant graphs, which are constructed according to Definition \ref{def: regular circulant}, of sizes $10,8$ and $7$ and respective degrees $2,4$ and $6$.
To make the graph connected, in the second step we add the two dashed edges to obtain the connected graph $\tilde{G}_{25}$.}
\label{fig: regular circulant}
\end{figure}
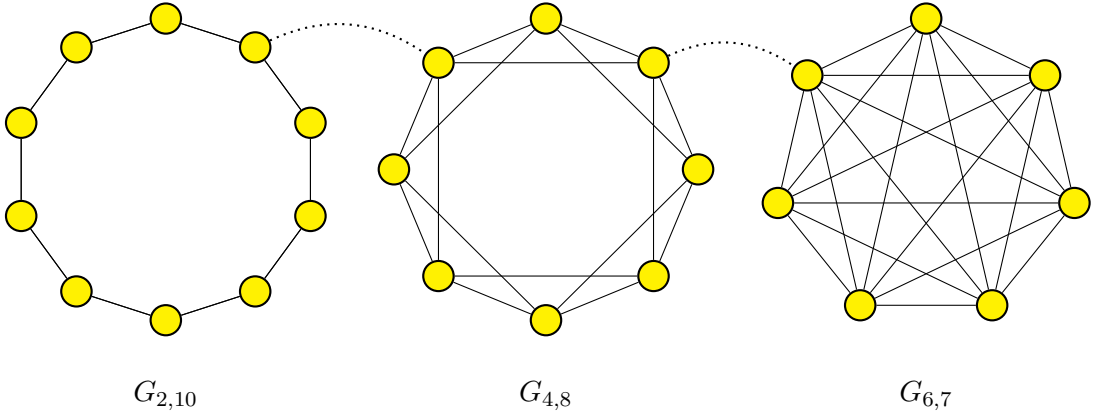
The sequence of disconnected graphs constructed in Definition \ref{def: disjoint circulant graphs} above has the desired properties:
\begin{pro}[Degree and PageRank structure union of disconnected graphs]
\label{pro: disjoint regular}
Let $p=(p_k)_{k \in \N}$ be any probability mass function such that $p_k=0$ for all odd $k$ and $X^{(p)}$ be distributed according to $p$. Then for the graphs $(G_n^{(p)})_{n \geq n_p}$ described in Definition \ref{def: disjoint circulant graphs},
\begin{enumerate}[(a)]
    \item $\lim_{n \rightarrow \infty}\mathbb{P}(d_{\phi_n}^{\sss (G_n^{(p)})}=k)=p_k$.
    \item $R_v^{\sss(G_n^{(p)})}=1$ for every $n \geq n_p$ and $v \in V(G_n^{(p)})$.
\end{enumerate}
\end{pro}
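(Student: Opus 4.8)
The plan is to establish the two claims by direct verification, handling the degree count in part (a) and the doubly-stochastic structure in part (b) independently; neither requires anything beyond bookkeeping.

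For part (a), I would first note that, since each circulant graph $G_{j,N_{j,n}}$ is $j$-regular (every vertex of $\Z_{N_{j,n}}$ is joined to precisely its $j$ nearest neighbours on the torus, which are genuinely distinct because $N_{j,n}\ge j+1$ by Definition \ref{def: disjoint circulant graphs}), \emph{all} of its vertices have degree $j$; moreover the component $G_{j,N_{j,n}}$ is empty for odd $j$, where $N_{j,n}=\lfloor np_j\rfloor=0$. Hence the number of vertices of degree $k$ in $G_n^{(p)}=\coprod_{j\in[M_n]}G_{j,N_{j,n}}$ equals $N_{k,n}=\lfloor np_k\rfloor$ when $k\in[M_n]$ and is $0$ otherwise. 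Dividing by the total vertex count $\#G_n^{(p)}=\sum_{j\in[M_n]}\lfloor np_j\rfloor$, and using $\#G_n^{(p)}=n(1+o(1))$ from Definition \ref{def: disjoint circulant graphs} (which follows from $n\sum_{j\in[M_n]}p_j-M_n\le\#G_n^{(p)}\le n$, since $\sum_{j>M_n}p_j\to0$ and $M_n=o(n)$), I obtain for any fixed $k$
\[
\mathbb{P}\big(d_{\phi_n}^{\sss(G_n^{(p)})}=k\big)=\frac{\lfloor np_k\rfloor\,\mathbbm{1}_{\{k\in[M_n]\}}}{n(1+o(1))}\ \stackrel{n\to\infty}{\longrightarrow}\ p_k ,
\]
because for even $k$ one eventually has $k\le M_n$ (as $M_n\to\infty$) together with $\lfloor np_k\rfloor/n\to p_k$, while for odd $k$ the left-hand side is identically $0=p_k$. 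This is statement (a).

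For part (b), I would use that the regularity of each component makes its PageRank block doubly stochastic. Indeed, for the $k$-regular graph $G_{k,N_{k,n}}$ the corresponding block of $\boldsymbol{P}^{\sss(G_n^{(p)})}$ equals $\tfrac1k\boldsymbol{A}^{\sss(G_{k,N_{k,n}})}$, whose row sums are $k/k=1$ and whose column sums are $\sum_i a_{ij}/k=d_j/k=1$ (using symmetry of the adjacency matrix and $k$-regularity). Thus every such block, and hence the block-diagonal matrix $\boldsymbol{P}^{\sss(G_n^{(p)})}$ itself, is doubly stochastic, and therefore so is every power $(\boldsymbol{P}^{\sss(G_n^{(p)})})^s$; in particular each of its column sums equals $1$. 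Plugging this into the Neumann-series expression \eqref{eq: PageRank Solution} for the solution of the PageRank equation gives, for every vertex $v$,
\[
R_v^{\sss(G_n^{(p)})}=(1-c)\sum_{s=0}^\infty c^s\sum_{j}\big((\boldsymbol{P}^{\sss(G_n^{(p)})})^s\big)_{jv}=(1-c)\sum_{s=0}^\infty c^s=1 ,
\]
which is statement (b). (Equivalently, one checks directly that $\boldsymbol{R}=\boldsymbol{1}_n$ solves \eqref{eq: PageRank matrix}, using $\boldsymbol{1}_n\boldsymbol{P}^{\sss(G_n^{(p)})}=\boldsymbol{1}_n$ by column-stochasticity, and then invokes uniqueness of the solution.)

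I do not anticipate a genuine obstacle here: the proposition is a computation, and its purpose is only to provide an exactly solvable base case for the counterexample construction. Two points ask for mild care. First, one divides by $\#G_n^{(p)}=n(1+o(1))$ rather than by $n$, which is exactly why $M_n$ is taken to diverge slowly (so that $M_n=o(n)$ and the normalization is harmless). Second, uniqueness of the PageRank vector for the \emph{disconnected} graph $G_n^{(p)}$ should be read off from the convergent Neumann series, equivalently the invertibility of $\mathbbm{I}_n-c\boldsymbol{P}$, rather than from the irreducibility/Perron--Frobenius argument mentioned after \eqref{eq: PageRank matrix}, which applies to connected graphs only.
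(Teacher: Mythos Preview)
Your proof is correct and follows exactly the same approach as the paper's: part~(a) from the construction (you merely spell out the counting that the paper leaves implicit), and part~(b) from double stochasticity of $\boldsymbol{P}^{\sss(G_n^{(p)})}$. Your closing remark that uniqueness of the PageRank vector on the disconnected graph should be read off from the Neumann series rather than from irreducibility is a nice clarification not made explicit in the paper.
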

\begin{proof}[Proof of Proposition \ref{pro: disjoint regular}]
Statement (a) follows directly from the construction of the graphs $G^{(p)}_n$.
Statement (b) is a consequence of the fact that the corresponding PageRank-matrix $\boldsymbol{P}^{(\sss G_n^{(p)})}$ is doubly-stochastic.
\end{proof}
\begin{rk}\label{rk: regular circulant}
{\normalfont \rmfamily We may replace the circulant regular graphs $G_{k,n}$ in Definition \ref{def: disjoint circulant graphs} by any degree-regular graphs of the prescribed degree and size provided that the sequence $(G_n^{(p)})_{n \in \N}$ converges in the local weak sense. The concrete choice from Definition \ref{def: regular circulant} above provides a specific illustrative example for which the local weak convergence is easy to prove (see Proposition \ref{pro: LWconv} below), at the price that only regular graphs of even degree occur. }\hfill\ensymboldefinition
\end{rk}
\subsection{Connecting the disjoint components}\label{subsec: Connecting components}
We next study the local weak limits of $G_n^{(p)}$ and $\tilde{G}^{(p)}_n$:

\begin{pro}[Local limit of disjoint unions of circulant graphs, and their connected versions]
\label{pro: LWconv}
Let $p=(p_k)_{k \in \N}$ be any degree distribution such that $p_k=0$ for all odd $k$. Then, $(G_n^{(p)})_{n \in \N}$ described in Definition \ref{def: disjoint circulant graphs} converges locally weakly to a rooted graph $(G_\infty^{(p)},\phi)$. In particular, also the sequence $(\tilde{G}^{(p)}_n)_{n \in \N}$ of connected graphs converges locally weakly to the same limit $(G_\infty^{(p)},\infty)$. 
\end{pro}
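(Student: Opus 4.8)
The plan is first to exhibit the limit explicitly, and then to verify local weak convergence via the neighbourhood criterion of Remark~\ref{rk: LWCriterion}. For an even integer $k\ge 2$, let $P_k$ denote the infinite, $k$-regular, vertex-transitive graph on vertex set $\Z$ with $i\sim j$ if and only if $1\le |i-j|\le k/2$; this is the obvious ``one-dimensional'' local limit of the circulant graphs $G_{k,N}$ of Definition~\ref{def: regular circulant} as $N\to\infty$, in the precise sense that $B_s^{(G_{k,N})}(0)\cong B_s^{(P_k)}(0)$ once $N$ exceeds a threshold of order $sk$, so that the radius-$s$ ball does not wrap around the torus. I would then take $(G_\infty^{(p)},\phi)$ to be the random rooted graph that equals $(P_k,0)$ with probability $p_k$ for each even $k\ge 2$; since $p$ is supported on the even integers and $k\mapsto (P_k,0)$ takes values in $\mathcal{G}_\star$, this is a well-defined random element of $\mathcal{G}_\star$ with law $\mu:=\sum_{k}p_k\,\delta_{(P_k,0)}$.

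By Remark~\ref{rk: LWCriterion}, and because $G_n^{(p)}$ is deterministic so that the outer expectation in \eqref{eq: LocalWeakConvDef} is trivial, it suffices to show that for every $s\ge 1$ and every $H_\star\in\mathcal{G}_\star$,
\[
\frac{1}{\#G_n^{(p)}}\,\#\big\{i\in V(G_n^{(p)})\colon B_s^{(G_n^{(p)})}(i)\cong H_\star\big\}\ \stackrel{n\to\infty}{\longrightarrow}\ \sum_{k}p_k\,\mathbbm{1}_{\{B_s^{(P_k)}(0)\cong H_\star\}}.
\]
The plan is to split the left-hand sum over the connected components $G_{k,N_{k,n}}$, $k\in[M_n]$, of $G_n^{(p)}$. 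Within a fixed component all $s$-balls are isomorphic by the circulant symmetry from Definition~\ref{def: regular circulant}, and by the first paragraph $B_s^{(G_{k,N_{k,n}})}(0)\cong B_s^{(P_k)}(0)$ for every fixed $k$ with $p_k>0$ and all large $n$, since $N_{k,n}=\lfloor np_k\rfloor\to\infty$. Hence, for $n$ large, the left-hand side equals $\sum_{k\in[M_n]}(N_{k,n}/\#G_n^{(p)})\,\mathbbm{1}_{\{B_s^{(G_{k,N_{k,n}})}(0)\cong H_\star\}}$. I would then fix a cutoff $T$ and let $n\to\infty$: the head $\sum_{k<T}$ converges termwise to $\sum_{k<T}p_k\,\mathbbm{1}_{\{B_s^{(P_k)}(0)\cong H_\star\}}$, using $N_{k,n}/\#G_n^{(p)}\to p_k$ (which follows from $\#G_n^{(p)}=n(1+o(1))$, cf.\ Definition~\ref{def: disjoint circulant graphs} and Proposition~\ref{pro: disjoint regular}) together with the ball stabilisation above, while the tail is bounded by $\frac{1}{\#G_n^{(p)}}\sum_{T\le k\le M_n}N_{k,n}\le (1+o(1))\sum_{k\ge T}p_k$, which can be made arbitrarily small by taking $T$ large. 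Letting $T\to\infty$ yields the displayed limit, so $G_n^{(p)}\to (G_\infty^{(p)},\phi)$ locally weakly.

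For the connected graphs $\tilde G_n^{(p)}$, I would use that they have the same vertex set as $G_n^{(p)}$, with $\#\tilde G_n^{(p)}=\#G_n^{(p)}=n(1+o(1))$, and that they are obtained from $G_n^{(p)}$ by adding only the $M_n-1$ bridge edges from Definition~\ref{def: disjoint circulant graphs}. A vertex $v$ satisfies $B_s^{(\tilde G_n^{(p)})}(v)=B_s^{(G_n^{(p)})}(v)$ unless $v$ lies within graph-distance $s$ of an endpoint of one of these bridges. Since the components form a path-like chain and crossing each bridge costs at least one step, a radius-$s$ ball in $\tilde G_n^{(p)}$ meets at most $2s+1$ components and contains at most $sk+1\le sM_n+1$ vertices from each, so such a ball has at most $(2s+1)(sM_n+1)$ vertices; as there are $2(M_n-1)$ bridge endpoints, the number of vertices whose $s$-ball is altered is $O(s^2M_n^2)$. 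Choosing $M_n$ to diverge slowly enough that also $M_n^2=o(n)$ — compatible with the requirement $N_{k,n}\ge k+1$ for $k\in[M_n]$ from Definition~\ref{def: disjoint circulant graphs} — this count is $o(n)$, so the empirical $s$-ball statistics of $\tilde G_n^{(p)}$ and of $G_n^{(p)}$ differ by $O(s^2M_n^2)/\#G_n^{(p)}=o(1)$. Combined with the previous paragraph and Remark~\ref{rk: LWCriterion}, this gives $\tilde G_n^{(p)}\to (G_\infty^{(p)},\phi)$ locally weakly as well.

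The only genuinely delicate point is this last bridge-perturbation estimate: one must ensure that adding the $M_n-1$ bridges changes the $s$-neighbourhood of only $o(n)$ vertices, which is what pins down how fast $M_n$ may grow (essentially $M_n=o(\sqrt n)$). The no-wraparound comparison $B_s^{(G_{k,N})}(0)\cong B_s^{(P_k)}(0)$ and the head/tail splitting in the disconnected case are routine once the candidate limit $\mu$ has been identified.
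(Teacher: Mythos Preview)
Your proof is correct and follows essentially the same route as the paper: identify the limit as the mixture $\mu=\sum_k p_k\,\delta_{(P_k,0)}$ (the paper's $(G_{k,\infty},0)$), verify convergence via the ball criterion of Remark~\ref{rk: LWCriterion}, and argue that the $M_n-1$ bridge edges perturb only a negligible fraction of $s$-balls. Your treatment of the bridge step is in fact more careful than the paper's one-line ``affects the degree of $o(n)$ vertices'': you bound the number of altered $s$-balls by $O(s^2M_n^2)$ and note that the extra constraint $M_n^2=o(n)$ is compatible with Definition~\ref{def: disjoint circulant graphs}, which is exactly the point the paper leaves implicit.
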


\begin{proof}[Proof of Proposition \ref{pro: LWconv}]
By Remark \ref{rk: LWCriterion} above, 
for fixed $k \in 2\N$, the sequence \\$(G_{k,n})_{n \geq n_p, M_n \geq k}$ converges locally weakly to the graph $(G_{k,\infty},\phi)$ on $\Z$ with $\phi \equiv 0$, where two vertices in $G_{k,\infty}$ are connected by an edge iff their distance is at most $k/2$ (see Figure \ref{fig: isomorphic} for the specific graph $G_{4,\infty}$).
Hence, the sequence $G_n^{(p)}$ converges locally weakly to the random rooted graph $(G_\infty^{(p)},0)$ which equals $(G_{k,\infty},0)$ with probability $\lim_{n \rightarrow \infty}N_{k,n}/n=p_k$.

As each component of $G_n^{(p)}$ is invariant under all cyclic permutations, the concrete choice of these $M_n-1$ edges is irrelevant up to isomorphisms. The  operation of adding edges affects the degree of $o(n)$ vertices and thus, by Remark \ref{rk: LWCriterion}, does not change the local weak limit.
\begin{figure}
\centering
\begin{tikzpicture}[x=1cm,y=1cm]
\node (b1) at (canvas polar cs:angle=90,radius=2cm) [circle, draw=black,thick,fill=yellow,minimum size = 4mm, inner sep = 0]{};
\node (b2) at (canvas polar cs:angle=126,radius=2cm) [circle, draw=black,thick,fill=yellow,minimum size = 4mm, inner sep = 0]{};
\node (b3) at (canvas polar cs:angle=162,radius=2cm) [circle, draw=black,thick,fill=yellow,minimum size = 4mm, inner sep = 0]{};
\node (b4) at (canvas polar cs:angle=198,radius=2cm) [circle, draw=black,thick,fill=yellow,minimum size = 4mm, inner sep = 0]{};
\node (b5) at (canvas polar cs:angle=234,radius=2cm) [circle, draw=black,thick,fill=yellow,minimum size = 4mm, inner sep = 0]{};
\node (b6) at (canvas polar cs:angle=270,radius=2cm) [circle, draw=black,thick,fill=yellow,minimum size = 4mm, inner sep = 0]{};
\node (b7) at (canvas polar cs:angle=306,radius=2cm) [circle, draw=black,thick,fill=yellow,minimum size = 4mm, inner sep = 0]{};
\node (b8) at (canvas polar cs:angle=342,radius=2cm) [circle, draw=black,thick,fill=yellow,minimum size = 4mm, inner sep = 0]{};
\node (b9) at (canvas polar cs:angle=18,radius=2cm) [circle, draw=black,thick,fill=yellow,minimum size = 4mm, inner sep = 0]{};
\node (b10) at (canvas polar cs:angle=54,radius=2cm) [circle, draw=black,thick,fill=yellow,minimum size = 4mm, inner sep = 0]{};
\node (ya1) at (2,0) {};
\node (ya2) at (2.5,0) {};
\node (ya3) at (3.0,0) {};
\node (ya4) at (11.5,0) {};
\node (ya5) at (12,0) {};
\node (ya6) at (12.5,0) {};
\node (c1) at (3.5,0) [circle, draw=black,thick,fill=red,minimum size = 3mm, inner sep = 0]{};
\node (c2) at (4,0) [circle, draw=black,thick,fill=red,minimum size = 3mm, inner sep = 0]{};
\node (c3) at (4.5,0) [circle, draw=black,thick,fill=red,minimum size = 3mm, inner sep = 0]{};
\node (c4) at (5,0) [circle, draw=black,thick,fill=red,minimum size = 3mm, inner sep = 0]{};
\node (c5) at (5.5,0) [circle, draw=black,thick,fill=red,minimum size = 3mm, inner sep = 0]{};
\node (c6) at (6,0) [circle, draw=black,thick,fill=red,minimum size = 3mm, inner sep = 0]{};
\node (c7) at (6.5,0) [circle, draw=black,thick,fill=red,minimum size = 3mm, inner sep = 0]{};
\node (c8) at (7,0) [circle, draw=black,thick,fill=red,minimum size = 3mm, inner sep = 0]{0};
\node (c9) at (7.5,0) [circle, draw=black,thick,fill=red,minimum size = 3mm, inner sep = 0]{};
\node (c10) at (8,0) [circle, draw=black,thick,fill=red,minimum size = 3mm, inner sep = 0]{};
\node (c11) at (8.5,0) [circle, draw=black,thick,fill=red,minimum size = 3mm, inner sep = 0]{};
\node (c12) at (9,0) [circle, draw=black,thick,fill=red,minimum size = 3mm, inner sep = 0]{};
\node (c13) at (9.5,0) [circle, draw=black,thick,fill=red,minimum size = 3mm, inner sep = 0]{};
\node (c14) at (10,0) [circle, draw=black,thick,fill=red,minimum size = 3mm, inner sep = 0]{};
\node (c15) at (10.5,0) [circle, draw=black,thick,fill=red,minimum size = 3mm, inner sep = 0]{};
\node (c16) at (11,0) [circle, draw=black,thick,fill=red,minimum size = 3mm, inner sep = 0]{};
\node (an1) at (11.2,-0.5) {$\Z$};
\foreach \from/\to in 
{b1/b2,b1/b3,b2/b3,b2/b4,b3/b4,b3/b5,b4/b5,b4/b6,b5/b6,b5/b7,b6/b7,b6/b8,b7/b8,b7/b9,b8/b9,b8/b10,b9/b10,b9/b1,b10/b1,b10/b2}
\draw[] (\from) to (\to);
\draw[rounded corners,dotted,rotate around={33:(b2)}] (-3,0.7) rectangle (0.5,2.2);
\draw[rounded corners,dotted] (6.25,-0.5) rectangle (7.75,0.5);
\foreach \from/\to in 
{c1/c2,c2/c3,c3/c4,c4/c5,c5/c6,c6/c7,c7/c8,c8/c9,c9/c10,c10/c11,c11/c12,c12/c13,c13/c14,c14/c15,c15/c16}
\draw[] (\from) to (\to);
\foreach \from/\to in 
{c1/c3,c2/c4,c3/c5,c4/c6,c5/c7,c6/c8,c7/c9,c8/c10,c9/c11,c10/c12,c11/c13,c12/c14,c13/c15,c14/c16}
\draw[bend left] (\from) to (\to);
\foreach \from/\to in 
{c1/c4,c2/c5,c3/c6,c4/c7,c5/c8,c6/c9,c7/c10,c8/c11,c9/c12,c10/c13,c11/c14,c12/c15,c13/c16}
\draw[bend right] (\from) to (\to);
 \draw[dotted] (ya3) -- (c1);
 \draw[dotted, ->] (c16) -- (ya4);
 \draw[bend left, <->] (0.5,2.5) to (6.5,0.7);
 \node at (3,2) {$\cong$};
\end{tikzpicture}
\caption{Illustration of the proof of Proposition \ref{pro: LWconv}: Any ball of radius $r < \lfloor n/2 \rfloor$ in graph-distance (indicated by the dotted rectangle) around any vertex in $G_{k,n}$ is isomorphic to the respective ball around $0$ in $G_{k,\infty}$. On the other hand, for given $r \geq 1$ we can choose $n \geq \lceil 2r \rceil$ so that each ball of radius $r$ in $G_{k,n}$ is isomorphic to the respective ball around $0$ in $G_{k,\infty}$.}
\label{fig: isomorphic}
\end{figure}
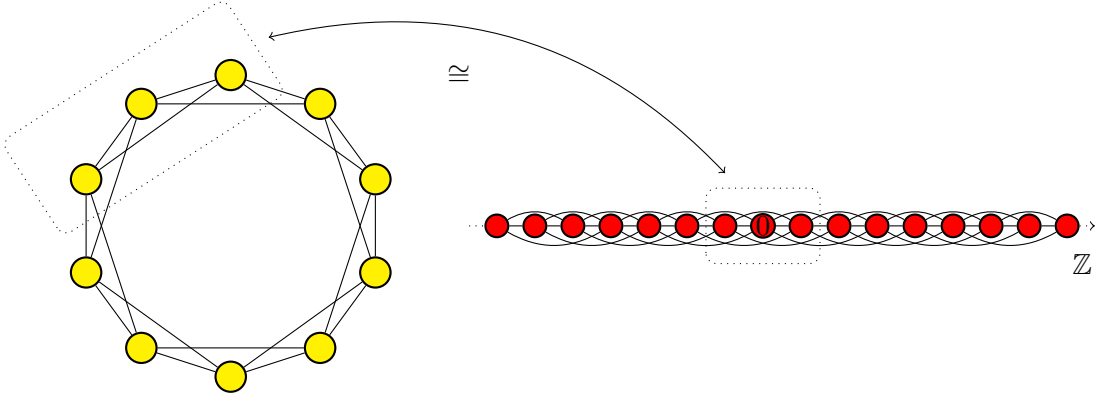
\end{proof}

From Propositions \ref{pro: disjoint regular} and \ref{pro: LWconv} we obtain the following theorem that produces a counter example to the power-law hypothesis:
\begin{thm}[PageRank of connected unions of circulant graphs]
\label{thm: Counterxample}
Let $p=(p_k)_{k \in \N}$ be any probability mass function such that $p_k=0$ for odd $k$. Then for the graphs $(\tilde{G}_n^{(p)})_{n \geq n_p}$, 
\begin{enumerate}[(a)]
    \item $\lim_{n \rightarrow \infty}\mathbb{P}(d_{\phi_n}=k)=p_k$.
  In particular, if $p$ is a power-law, then $d_{\phi_n}$ converges in distribution to a power-law with the same tail-index.
    \item $\boldsymbol{R}^{\sss(\tilde{G}_n^{(p)})}_{\phi_n} \stackrel{n \rightarrow \infty}{\longrightarrow} 1$ in probability.
\end{enumerate}
\end{thm}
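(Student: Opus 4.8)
The plan is to deduce both statements for the connected graphs $\tilde{G}_n^{(p)}$ from the facts about the disconnected graphs $G_n^{(p)}$ in Proposition~\ref{pro: disjoint regular}, using Proposition~\ref{pro: LWconv} (both sequences share the local weak limit $(G_\infty^{(p)},\phi)$) together with the observation that $\tilde{G}_n^{(p)}$ is obtained from $G_n^{(p)}$ by adding only $M_n-1=o(n)$ edges and no vertices.

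For part~(a), I would observe that adding these $M_n-1$ edges changes the degree of at most $2(M_n-1)$ of the $\#V(\tilde{G}_n^{(p)})=\#V(G_n^{(p)})=n(1+o(1))$ vertices, whence
\begin{equation*}
\Big|\mathbb{P}\big(d_{\phi_n}^{\sss(\tilde{G}_n^{(p)})}=k\big)-\mathbb{P}\big(d_{\phi_n}^{\sss(G_n^{(p)})}=k\big)\Big|\le \frac{2(M_n-1)}{\#V(G_n^{(p)})}\xrightarrow{n\to\infty}0 ,
\end{equation*}
and Proposition~\ref{pro: disjoint regular}(a) identifies the limit as $p_k$. (Equivalently, one may feed the bounded continuous, indeed $B_1(\phi)$-measurable, test function $\mathbbm{1}_{\{d_\phi=k\}}$ into the local weak convergence of $\tilde{G}_n^{(p)}$ and use that $G_\infty^{(p)}$ is a $k$-regular graph with probability $p_k$.) The power-law addendum is then immediate, since $d_{\phi_n}\to X^{(p)}\sim p$ in distribution and $p$ is by assumption a power law.

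For part~(b), the only real work is to identify the limiting root-PageRank $R_\phi(G_\infty^{(p)})$, and the trick I would use is to route this through the disconnected graphs. Since the PageRank matrix of any graph is block-diagonal over connected components, the root-PageRank depends only on the component of the root, so Proposition~\ref{pro: disjoint regular}(b) gives $R_{\phi_n}^{\sss(G_n^{(p)})}=1$ deterministically for every $n\ge n_p$. The graphs $G_n^{(p)}$ have no isolated vertices (all degrees are at least $2$) and converge locally weakly to $(G_\infty^{(p)},\phi)$ by Proposition~\ref{pro: LWconv}, so Theorem~\ref{thm: localConvPageRank} gives $R_{\phi_n}^{\sss(G_n^{(p)})}\to R_\phi(G_\infty^{(p)})$ in distribution; as the left-hand side is the constant $1$, uniqueness of the distributional limit forces $R_\phi(G_\infty^{(p)})=1$ almost surely. (This is consistent with a direct check: the PageRank matrix of the $k$-regular graph $G_{k,\infty}$ is doubly stochastic, so $\sum_{j}(\boldsymbol{P}^s)_{j\phi}=1$ for all $s$ and hence $R_\phi=(1-c)\sum_{s\ge 0}c^s=1$.) Applying Theorem~\ref{thm: localConvPageRank} once more, now to $\tilde{G}_n^{(p)}$, I get $R_{\phi_n}^{\sss(\tilde{G}_n^{(p)})}\to R_\phi(G_\infty^{(p)})=1$ in distribution, and convergence in distribution to a constant is the same as convergence in probability, which is the assertion.

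The hard part is precisely this transfer step: a direct computation on $\tilde{G}_n^{(p)}$ looks hopeless because the connecting edges destroy the double stochasticity of the PageRank matrix, so one goes around it by noting that $o(n)$ extra edges leave the local weak limit untouched and then letting Theorem~\ref{thm: localConvPageRank} and uniqueness of the limit do the work. The remaining points are routine bookkeeping: checking the hypotheses of Theorem~\ref{thm: localConvPageRank} for both sequences (damping $c\in(0,1)$, no isolated vertices), and observing that the continuity-point proviso there is harmless, since $\delta_1$ is continuous at every $r\neq 1$, and $\mathbb{P}(R_{\phi_n}^{\sss(\tilde{G}_n^{(p)})}>1-\varepsilon)\to 1$ together with $\mathbb{P}(R_{\phi_n}^{\sss(\tilde{G}_n^{(p)})}>1+\varepsilon)\to 0$ for $\varepsilon\in(0,1)$ is exactly convergence in probability to $1$.
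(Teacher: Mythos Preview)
Your proof is correct and follows essentially the same route as the paper: for (a) you use that only $o(n)$ vertices have their degree changed when passing from $G_n^{(p)}$ to $\tilde{G}_n^{(p)}$, and for (b) you identify the limiting root-PageRank as $1$ by applying Theorem~\ref{thm: localConvPageRank} to the disconnected sequence $(G_n^{(p)})$ (whose PageRank is identically $1$ by Proposition~\ref{pro: disjoint regular}(b)), invoke uniqueness of the distributional limit, and then apply Theorem~\ref{thm: localConvPageRank} once more to $(\tilde{G}_n^{(p)})$ via the shared local weak limit from Proposition~\ref{pro: LWconv}. Your additional bookkeeping (the continuity-point remark, the direct doubly-stochastic check on $G_{k,\infty}$) is not needed but does no harm.
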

\begin{proof}[Proof of Theorem \ref{thm: Counterxample}]
As $M_n=o(n)$, Statement (a) follows readily from Statement a) of Proposition \ref{pro: disjoint regular}. 
By Statement (b) of Proposition \ref{pro: disjoint regular},  we know that $\boldsymbol{R}^{\sss(G_n^{(p)}))}_{\phi_n} \stackrel{n \rightarrow \infty}{\longrightarrow} 1$ in probability and in distribution. By Proposition \ref{pro: LWconv}, the sequence $(\tilde{G}_n^{(p)}))_{n \in \N}$ of connected graphs converges locally weakly to the same random graph $(G_\infty,\phi)$ as the sequence $(G_n^{(p)}))_{n \in \N}$. Hence, employing Theorem \ref{thm: localConvPageRank}, we conclude that the sequences $(\boldsymbol{R}^{\sss(\tilde{G}_n^{(p)}))}_{\phi_n})$ and $(\boldsymbol{R}^{\sss(G_n^{(p)}))}_{\phi_n})$ have the same limit $R^{\sss (G_\infty)}_\phi$ in distribution. As the distributional limit of real-valued random variables is unique (e.g., \cite[Remark 13.13]{Kl20}) we conclude that $\boldsymbol{R}^{\sss(\tilde{G}_n^{(p)}))}_{\phi_n} \stackrel{n \rightarrow \infty}{\rightarrow} 1$ in distribution. In particular, convergence in distribution to a constant limit implies convergence in probability (e.g., \cite[Section 7.2 Thm. 4]{GrSt01}), which finishes the proof.
\end{proof}
\section{Discussion and open problems}
\label{sec: discussion}
In this section, we provide a discussion of our results and state some open problems. We start in Section \ref{sec-directed-settings} by discussing extensions of our work to directed graphs. In Section \ref{sec-diff-PAM}, we provide an intuitive explanation why the power-law hypothesis holds true for undirected preferential attachment models, while it fails for their directed versions. We close in Section \ref{sec-concl-outlook} by giving conclusions and open problems.

\subsection{Extensions to directed setting}
\label{sec-directed-settings}
In this section, we investigate to what extent our results extend to the directed setting. We discuss both a setting in which the PageRank can be upper bounded by the in-degree, as well as an extension of our counter example. We start by recalling how the PageRank vector is defined in the directed setting.

For a directed graph $G_n=(V_n, 
\vec{E_n})$ with $\#V_n=n$, in-degree vector $\boldsymbol{d}^-=(d_i^-)_{i \in V_n}$, and out-degree vector $\boldsymbol{d}^+=(d_i^+)_{i \in V_n},$ the directed analogue of \eqref{eq: PageRankMatrixUndirect} is defined as 
\begin{equation}
 p^{\sss(G_n)}_{ij}:=\frac{a_{ij}^{\sss(G_n)}}{d_i^+},    
\end{equation}
and the graph-normalized PageRank equation reads 
\begin{equation}\label{eq: PageRank matrixDir}
\boldsymbol{R}^{\sss(G_n)}=c\, \boldsymbol{R}^{\sss(G_n)} \boldsymbol{P}^{\sss(G_n)}+(1-c)\boldsymbol{1}_n, \quad \boldsymbol{R}^{\sss(G_n)} \in [0,n]^n.
\end{equation}
\subsubsection{Upper bound in directed setting with bounded ratio of in- and out-degrees}
The simple proof of Theorem \ref{thm: upperBoundUndirected} is based on employing the symmetry of the adjacency matrix, which is a stronger assumption than requiring the in- and out-degree vector to coincide. However, to extend the scope of Theorem \ref{thm: upperBoundUndirected} similar to \eqref{eq: PageRankSymmetry} and omitting the $G_n$-dependency in notation, we can write
\begin{equation}\label{eq: UpperBoundBoundedRatio1}
\begin{split}
\frac{R_i}{d_i^-}&=c\sum_{j \in [n]}\frac{a_{ji}}{d_i^-d_j^+}R_j+\frac{1-c}{d_i^-}=c\sum_{j \in [n]}\frac{d_j^-}{d_j^+}\frac{a_{ji}}{d_i^-}\frac{R_j}{d_j^-}+\frac{1-c}{d_i^-} \cr
&=c\sum_{j \in [n]}\frac{d_j^-}{d_j^+}p^\text{rev}_{ji}\frac{R_j^-}{d_j}+\frac{1-c}{d_i^-},
\end{split}
\end{equation}
where 
    \begin{equation}p^{\text{rev}}_{ij}:=\frac{a_{ji}}{d_i^-}
    \end{equation}
defines a stochastic matrix $\boldsymbol{P}^\text{rev}.$ 
Now assume that there is exists a $K < \infty$ such that       
    \begin{equation}
    \max_{i\in [n]} \frac{d_i^-}{d_i^+}\leq K.
    \end{equation}
Then defining $\boldsymbol{v}^{\text{dir}}$ by $v_i^\text{dir}:=R_i/d_i^-$ for $i\in[n]$, we obtain from \eqref{eq: UpperBoundBoundedRatio1}
\begin{equation}
\boldsymbol{v} \leq c \,K\boldsymbol{P}^\text{rev} \boldsymbol{v}+(1-c)\boldsymbol{Q}\boldsymbol{1}_n.    
\end{equation}
In case that $\Vert cK\boldsymbol{P}^\text{rev} \Vert <1$, which is certainly satisfied when $c<1/K$, we can invert the operator $\mathbbm{I}_n-cK\boldsymbol{P}$, to obtain
\begin{equation}\label{eq: UpperBoundBoundedRatio2}
(1-c)(\mathbbm{I}_n-c\boldsymbol{P}^\text{rev})^{-1}\boldsymbol{Q}\boldsymbol{1}_n \leq \boldsymbol{v} \leq (1-c)(\mathbbm{I}_n-cK\boldsymbol{P}^\text{rev})^{-1}\boldsymbol{Q}\boldsymbol{1}_n,    
\end{equation}
which is a generalization of \eqref{eq: UpperBoundAlmostThereWithV}.
Now, for the upper bound in \eqref{eq: UpperBoundBoundedRatio2}, we proceed as in the remainder of the proof of Theorem \ref{thm: upperBoundUndirected} to obtain the following result:
\begin{thm}[Upper bound on PageRank for bounded ratio of in- and out-degrees]\label{thm: UpperBoundBoundedRatio}
Let $G_n=(V_n,\vec{E}_n)$ be any any directed graph with $\# V_n=n$ such that  $m_n:=\min_{i \in V_n}d_i^{-} \geq 1$ and  $K_n:=\max_{i \in V_n}\frac{d_i^-}{d_i^+} < \frac{m_n}{c}$. Then
\begin{equation}
R_i^{\sss(G_n)} \leq  \frac{K_n}{m_n}d_i^-\quad \text{for all }i \in V_n.
\end{equation}
\end{thm}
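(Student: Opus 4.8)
The plan is to mimic the proof of Theorem~\ref{thm: upperBoundUndirected} essentially verbatim, keeping track of the two extra factors $K_n$ and $1/m_n$ that the directed setting produces. The discussion preceding the statement already does most of the work: the substitution $\boldsymbol{v}^{\mathrm{dir}}=(R_i/d_i^-)_{i\in V_n}$ together with the chain of identities \eqref{eq: UpperBoundBoundedRatio1}, the hypothesis $\max_i d_i^-/d_i^+\le K_n$, and the fact that the matrix $\boldsymbol{P}^{\mathrm{rev}}$ with entries $p^{\mathrm{rev}}_{ij}=a_{ji}/d_i^-$ is row-stochastic, reduce the claim to the componentwise inequality in \eqref{eq: UpperBoundBoundedRatio2}. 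Hence it only remains to bound the vector $(\mathbbm{I}_n-cK_n\boldsymbol{P}^{\mathrm{rev}})^{-1}\boldsymbol{Q}\boldsymbol{1}_n$ from above, which is the exact analogue of Lemma~\ref{lem: vonNeumann}.

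For that estimate I would first record that, under the hypothesis $K_n<m_n/c$, the operator $cK_n\boldsymbol{P}^{\mathrm{rev}}$ is a contraction, so the Neumann series $(\mathbbm{I}_n-cK_n\boldsymbol{P}^{\mathrm{rev}})^{-1}=\sum_{s\ge 0}(cK_n\boldsymbol{P}^{\mathrm{rev}})^s$ converges and is a nonnegative operator, which is what makes the inequalities in \eqref{eq: UpperBoundBoundedRatio2} and below legitimate. Since $d_i^-\ge m_n$ for all $i$, we have $\boldsymbol{Q}\boldsymbol{1}_n\le m_n^{-1}\boldsymbol{1}_n$, and since $\boldsymbol{P}^{\mathrm{rev}}$ is row-stochastic it fixes $\boldsymbol{1}_n$; combining these,
\[
(\mathbbm{I}_n-cK_n\boldsymbol{P}^{\mathrm{rev}})^{-1}\boldsymbol{Q}\boldsymbol{1}_n\ \le\ \frac{1}{m_n}\sum_{s\ge 0}(cK_n)^s(\boldsymbol{P}^{\mathrm{rev}})^s\boldsymbol{1}_n\ =\ \frac{1}{m_n(1-cK_n)}\,\boldsymbol{1}_n .
\]
Inserting this into the upper estimate of \eqref{eq: UpperBoundBoundedRatio2} yields $\boldsymbol{v}^{\mathrm{dir}}\le \tfrac{1-c}{m_n(1-cK_n)}\boldsymbol{1}_n$, i.e.\ $R_i=d_i^-\,v_i^{\mathrm{dir}}\le \tfrac{1-c}{m_n(1-cK_n)}\,d_i^-$ for every $i\in V_n$.

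To reach the stated form it then suffices to observe that $\tfrac{1-c}{1-cK_n}\le K_n$ in the range of parameters forced by the hypothesis, after which $R_i\le \tfrac{K_n}{m_n}d_i^-$ follows at once. I expect this comparison of constants — equivalently, checking that $K_n<m_n/c$ is exactly strong enough both to make $cK_n\boldsymbol{P}^{\mathrm{rev}}$ a contraction and to absorb the factor $(1-c)/(1-cK_n)$ into $K_n$ — to be the only genuinely delicate point; everything else is bookkeeping identical to the proof of Theorem~\ref{thm: upperBoundUndirected}. Should the plain inequality $\tfrac{1-c}{1-cK_n}\le K_n$ (which is equivalent to $cK_n^2-K_n+1-c\le 0$) require a touch more than $K_n<m_n/c$, one can either strengthen the hypothesis accordingly or state the cleaner but weaker conclusion $R_i\le \tfrac{1-c}{m_n(1-cK_n)}d_i^-$.
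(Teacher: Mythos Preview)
Your approach is exactly the paper's: the discussion preceding the theorem culminates in \eqref{eq: UpperBoundBoundedRatio2}, after which the paper simply says to ``proceed as in the remainder of the proof of Theorem~\ref{thm: upperBoundUndirected}'', i.e., apply the analogue of Lemma~\ref{lem: vonNeumann}. Carrying that out with $\boldsymbol{Q}\boldsymbol{1}_n\le m_n^{-1}\boldsymbol{1}_n$ and row-stochasticity of $\boldsymbol{P}^{\mathrm{rev}}$ yields precisely your bound $v_i^{\mathrm{dir}}\le \tfrac{1-c}{m_n(1-cK_n)}$, so up to that point you and the paper agree verbatim.

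Your caution about the final comparison of constants is warranted, and in fact the paper does not supply that step either. Two points are worth recording. First, convergence of the Neumann series for $(\mathbbm{I}_n-cK_n\boldsymbol{P}^{\mathrm{rev}})^{-1}$ requires $cK_n<1$; the stated hypothesis $K_n<m_n/c$ only gives $cK_n<m_n$, which coincides with the former only when $m_n=1$ (the paper's own preceding text explicitly assumes $c<1/K$). Second, since $\sum_i d_i^-=\sum_i d_i^+$ forces $K_n\ge 1$, your target inequality $\tfrac{1-c}{1-cK_n}\le K_n$ factors as $(K_n-1)\bigl(cK_n-(1-c)\bigr)\le 0$ and hence holds iff $1\le K_n\le (1-c)/c$; this is \emph{not} implied by $K_n<m_n/c$ (take $c=\tfrac12$, $m_n=1$, $K_n=\tfrac32$). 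Thus your hedged conclusion $R_i\le \tfrac{1-c}{m_n(1-cK_n)}\,d_i^-$ under $cK_n<1$ is the honest output of the argument the paper sketches; obtaining the displayed constant $K_n/m_n$ genuinely needs the extra restriction $K_n\le (1-c)/c$.
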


A special case of Theorem \ref{thm: UpperBoundBoundedRatio} arises for directed {\em Eulerian} graphs, in which $d_i^+=d_i^-\geq 1$ for all $i\in V_n$, and for which $K_n=m_n=1.$ On the other hand, the directed version of the preferential attachment model clearly fails the assumption of a bounded ratio of out- and in-degrees, which aligns with the known result that the limiting root-PageRank for that model has heavier tails than the in-degree of the root (\cite[Theorem 3.1]{BaOC22}).
\begin{rk}[Interpretation of $\boldsymbol{P}^{\mathrm{rev}}$]
{\normalfont \rmfamily In view of the \textit{random surfer} interpretation (see \cite[Section 2.1.2]{BrPa98}) of the PageRank vector as the stationary distribution of a random walker who at each step with probability $c$ chooses one of the outgoing edges uniformly at random and with probability $1-c$ jumps to a vertex chosen uniformly at random from the entire vertex set $V_n$, the matrix $\boldsymbol{P}^{\mathrm{rev}}$ describes the reversed random walk, in which the random walker follows edges in the opposite direction. Such random walk, too, was employed for characterizing other vertex centrality measures, e.g.\ CheiRank centrality \cite{ZhZhSh10}.}\hfill\ensymboldefinition    
\end{rk}

\subsubsection{Lower bound in directed setting} \cite[Theorem 2.1]{GavdHLi20} relates local weak convergence of graph sequences to  convergence in distribution of the associated root-PageRank. This theorem covers the case of \textit{directed} graphs as well, with a specific form of local convergence (where we consider the in-components and keep the out-degrees of vertices as vertex marks). Taking that into account, an inspection of the proof of Proposition \ref{pro: PageRankLowerBound} immediately leads to the following generalization, where we now introduce
    \begin{equation*}
        \dphigeqalphadir=\#\{j \mid j \rightarrow \phi \text{ and } d^+_j \geq \alpha\}.
    \end{equation*}
\begin{pro}[Lower bound on PageRank for directed graphs]
\label{pro: LowerBoundDirected}
Let $G=(V,\vec{E},\phi)$ be an infinite rooted directed random graph with in-degree vector $\boldsymbol{d}^-$ and out-degree vector $\boldsymbol{d}^+$ which arises as the local weak limit of a sequence $(G_n)_{n \in \N}$ of directed graphs. Let further $R_\phi$ denote the limiting (in distribution) root-PageRank. Assume that there are $\alpha>0$ and $\varepsilon>0$ such that, as $k\rightarrow \infty,$
\begin{equation}\label{eq: LowerBoundConditionDirected}
\mathbb{P}\left(d^{-}_\phi >k, \dphigeqalphadir\geq (1-\varepsilon)d^-_\phi\right)=o(\mathbb{P}(d^-_\phi >k)).
\end{equation}
Then,
\begin{equation}
\mathbb{P}(R_\phi>k) \geq (1+o(1)) \mathbb{P}\Big(d^-_\phi>\frac{\alpha}{\varepsilon c(1-c)}k\Big) \quad \text{as }k \rightarrow \infty.   
\end{equation}
\end{pro}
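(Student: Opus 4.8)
The plan is to transcribe the proof of Proposition~\ref{pro: PageRankLowerBound} into the directed setting, replacing the undirected degree $d_\phi$ by the in-degree $d^-_\phi$, the neighbours $j \sim \phi$ by the in-neighbours $j \to \phi$, and the degrees $d_j$ that occur as denominators in the one-step transition probabilities by the out-degrees $d^+_j$. Indeed, in the directed set-up the (possibly infinite) matrix $\boldsymbol{P}$ of Definition~\ref{def: limitingPageRank} has entries $p_{ij}=a_{ij}/d^+_i$, so that $\sum_{j \in V}(\boldsymbol{P})_{j\phi}=\sum_{j \to \phi}1/d^+_j$, a sum of $d^-_\phi$ nonnegative terms, each at most $1$ since $j\to\phi$ forces $d^+_j \geq 1$.

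First I would keep only the $s=0$ and $s=1$ terms of the Neumann series \eqref{eq: LimitingPageRank} (all terms being nonnegative), which gives
\begin{equation*}
\mathbb{P}(R_\phi > k) \geq \mathbb{P}\Big(1-c + c(1-c)\sum_{j \to \phi}\frac{a_{j\phi}}{d^+_j} > k\Big) \geq \mathbb{P}\Big(c(1-c)\sum_{j \to \phi}\frac{1}{d^+_j} > k\Big).
\end{equation*}
Then I would perform the same case distinction as in \eqref{eq: LowerBoundSplit}, according to whether $\#\{j \to \phi : d^+_j < \alpha\} \geq \varepsilon d^-_\phi$ or not: on the former event $\sum_{j \to \phi}1/d^+_j \geq \varepsilon d^-_\phi/\alpha$, so intersecting with $\{d^-_\phi > \alpha k/[\varepsilon c(1-c)]\}$ forces $c(1-c)\sum_{j\to\phi}1/d^+_j > k$; and since $\#\{j \to \phi : d^+_j < \alpha\} + \dphigeqalphadir = d^-_\phi$, the complementary event is contained in $\{\dphigeqalphadir \geq (1-\varepsilon)d^-_\phi\}$, so a union bound yields
\begin{equation*}
\mathbb{P}\Big(c(1-c)\sum_{j \to \phi}\frac{1}{d^+_j} > k\Big) \geq \mathbb{P}\Big(d^-_\phi > \frac{\alpha k}{\varepsilon c(1-c)}\Big) - \mathbb{P}\Big(d^-_\phi > \frac{\alpha k}{\varepsilon c(1-c)},\ \dphigeqalphadir \geq (1-\varepsilon)d^-_\phi\Big).
\end{equation*}
Finally, applying the hypothesis \eqref{eq: LowerBoundConditionDirected} shows that the subtracted term is $o\big(\mathbb{P}(d^-_\phi > \alpha k/[\varepsilon c(1-c)])\big)$ as $k \to \infty$, which gives the assertion.

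There is no genuine obstacle here: each inequality above is the directed counterpart of a step already carried out for Proposition~\ref{pro: PageRankLowerBound}, and no new estimate is required. The one point that must be treated with care rather than computed is that $R_\phi$ now denotes the limiting root-PageRank in the \emph{directed} sense of local weak convergence — tracking the in-component of each vertex and recording its out-degree as a mark — which is precisely the regime covered by \cite[Theorem~2.1]{GavdHLi20}; this is what makes $R_\phi$ well defined and ensures that the relevant bookkeeping is indeed in terms of in-neighbours and out-degrees, as used above. I would therefore expect the final write-up to be little more than a one-paragraph reprise of the proof of Proposition~\ref{pro: PageRankLowerBound}.
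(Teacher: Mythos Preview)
Your proposal is correct and matches the paper's approach exactly: the paper simply states that ``an inspection of the proof of Proposition~\ref{pro: PageRankLowerBound} immediately leads to the following generalization,'' and what you have written is precisely that inspection, with $d_\phi$, $j\sim\phi$, and $d_j$ replaced by $d^-_\phi$, $j\to\phi$, and $d^+_j$ respectively. No further argument is needed.
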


Also Corollary \ref{cor: lowerbound} can be obviously generalized to the directed setting, but we refrain from doing so.

As a particular application of Proposition \ref{pro: LowerBoundDirected}, we directly obtain the bound 
\[\mathbb{P}(R_\phi>k) \geq (1+o(1))\mathbb{P}(d_\phi^{-}>\beta k) \quad \text{as }k \rightarrow \infty
\]
for the limiting root-PageRank of the directed version of the preferential attachment model, for any $\beta>1/[c(1-c)]$. To see this, note that in the directed version of the P\'olya point tree (\cite[Definition 4.4]{BaOC22}), each vertex has out-degree $1$, so \eqref{eq: LowerBoundConditionDirected} is fulfilled for any choices of $\alpha>1$ and $0<\varepsilon<1$.
However, this lower bound is \textit{not} sharp, as the tails of the limiting root-PageRank are actually heavier than those of the in-degree (\cite[Theorem 3.1]{BaOC22})
\subsection{Intuition for the difference in directed and undirected preferential attachment models}
\label{sec-diff-PAM}
It is surprising that the tail of the PageRank is drastically different in the directed and undirected preferential attachment models. Indeed, in the directed model, PageRank has a heavier tail than the degrees \cite{BaOC22,}. Furthermore, the authors in \cite{AnBaBhPi23} prove that this result persists in multi-type preferential attachment networks and even in multi-type uniform attachment networks. In the latter case, surprisingly, the degree-distribution of the vertices of each type is geometric, while PageRank, for all types, has a power law distribution with the same exponent for each type of vertices \cite[Theorem 5.3(b)]{AnBaBhPi23}. 
At the same time, our Theorem~\ref{thm: upperBoundUndirected} states that in any undirected graph, PageRank cannot have a heavier tail than the degree. An intuitive explanation of this drastic difference between directed and undirected graphs stems from the fact that PageRank of a vertex is proportional to the sum of the probabilities of all paths leading to this vertex \cite{AvLi06}; consequently, the limiting distribution of PageRank is defined by the sum of the  probabilities of all paths leading to the root vertex in the local weak limit. Such path-counting arguments have been successfully applied also, e.g., to the personalized PageRank with restarts on undirected graphs \cite{AvVdHSo14}.
In the directed preferential attachment model, the local weak limit is a directed tree, and the sum of the probabilities of all (directed) paths to the root equals the discounted tree size. Specifically, the size of the generation at distance $l$ from the root, is discounted by the factor $(c/m)^l$ with $m$ being the constant and deterministic out-degree. Furthermore, this tree is a realization of a continuous-time branching process, stopped at a random time (more precisely, this random time has an exponential distribution with parameter equal to the growth rate of the process, i.e., the so-called Malthusian parameter). Up to that random time, the degree of the root grows exponentially in time, but the discounted tree size grows exponentially as well, at a faster rate. These growth rates enter the power-law exponent making the tail of PageRank heavier than that of the degree.  

In contrast, in the undirected preferential attachment model, a step on the path through vertex $v$ is discounted by the factor $c/d_v$. Our analysis shows that this difference is crucial:  it makes PageRank of the root grow at most as fast as its degree. Note also that in the undirected model, the sum of the probabilities of all paths to the root is not the same as the discounted tree size because edges can be traversed both ways. This makes the path counting in the undirected case more difficult, but this is not the feature that affects the power law exponent of PageRank because each extra step is penalized by the factor $c$. 

Altogether, we believe that the differences between the directed and undirected models are educational, and their better understanding will lead to new results in the future. 
\subsection{Conclusion and outlook}
\label{sec-concl-outlook}
The upper bound for PageRank stated in Theorem \ref{thm: upperBoundUndirected} holds true for every finite undirected graph. It also extends to the case of directed graphs with a bounded ratio of degrees. On the contrary, the counter-example presented in Theorem \ref{thm: Counterxample} shows that a similar lower bound fails when the degrees of vertices are equal to those of their neighbors (i.e., maximal degree-degree correlations).
Proposition \ref{pro: PageRankLowerBound} provides a sufficient assumption for an asymptotic lower bound, which is particularly easy to check for sequences of graphs which converge to (branching-process) trees in the local weak sense.
Combining Theorem \ref{thm: upperBoundUndirected} and Proposition \ref{pro: PageRankLowerBound}, we obtain a statement on the asymptotic behavior of PageRank which holds under assumptions that are reasonably simple to check in applications, which is one of the strengths of the result.

Compared to the related work in e.g., \cite{JeOC10,ChLiOl17,OC21}, the proofs in the present paper are not based on studying a stochastic fixed-point equation on a suitable (marked) branching-process tree, but instead rely on applying the recent result of \cite{GavdHLi20} on convergence in distribution for the root-PageRank associated with locally weakly converging sequences of graphs. This simplification, however, comes at the expense of a somewhat weaker result, in that we cannot identify the asymptotic constant $a$ such that $\mathbb{P}(R_\phi>k)=a\mathbb{P}(d_\phi>k)(1+o(1))$ as $k\rightarrow \infty.$ 
It would be of interest to identify such a constant in general like it is the case, e.g., for the directed configuration model (\cite[Theorems 6.4 \& 6.6]{ChLiOl17}).

It remains an open question whether the sufficient Condition \eqref{eq: LowerBoundMainAssumtion} in Proposition \ref{pro: PageRankLowerBound} for the respective asymptotic lower bound actually is an if and only if condition. Moreover, we believe Condition \eqref{eq: LowerBoundMainAssumtion} to hold much more generally than for the tree-scenarios considered in the applications, with the counterexample presented in Theorem \ref{thm: Counterxample} being a rather artificial exception. It would hence be of interest to further examine the scope of our main result.

\paragraph{Acknowledgements.}
The work of RvdH and NL is supported in part by the Netherlands Organisation for Scientific Research (NWO) through Gravitation-grant {\normalfont \sffamily NETWORKS}-024.002.003.

\printbibliography
\end{document}